 \newenvironment{dedication} 
        {\vspace{6ex}\begin{quotation}\begin{center}\begin{em}}
        {\par\end{em}\end{center}\end{quotation}}
\newfont{\rams}{msbm10 scaled\magstep1}
\newfont{\ramss}{msbm10 scaled\magstep0}
\newfont{\iams}{msbm10}
\newfont{\gotic}{eufm10 scaled\magstep1}
\newfont{\bellap}{eusm10 scaled\magstep1}
\newcommand{\am}{A\!\Join^{{\!}f}{\!}J}
\newcommand{\q}{{J}}
\newcommand{\p}{{P}}
\newcommand{\da}{{A\!\Join^f\!\!\!J}}
\newcommand{\dasu}{{A\!\Join^{f'}\!\!\!J}}
\newcommand{\sss}{{\rm Spec}}
\newcommand{\spec}{{\rm Spec}}
\newcommand{\Jac}{{\rm Jac}}%
\newcommand{\mmm}{{\rm Max}}
\newcommand{\Ker}{{\rm Ker}}
\newcommand{\Hom}{{\rm Hom}}
\newcommand{\f}{\mathfrak}
\newcommand{\z}{{\ldots}}
\newcommand{\w}{{\setminus}}
\newcommand{\ude}{\mbox{\textsl{id}}} %
\newtheoremstyle{break}
  {9pt}
  {9pt}
  {\itshape}
  {}
  {\textsc}
  {.}
  {.7em}
  {}
\newtheoremstyle{break1}
  {9pt}
  {9pt}
  {}
  {}
  {\textsc}
  {.}
  {.7em}
  {}
\theoremstyle{break}
\newtheorem{thm}{ \textsc{Theorem}}[section]
\newtheorem{cor}[thm]{ \textsc{Corollary}}
\newtheorem{lem}[thm]{ \textsc{Lemma}}
\newtheorem{prop}[thm]{ \textsc{Proposition}}
\theoremstyle{break1}
\newtheorem{ex}[thm]{ \textsc{Example}}
\newtheorem{oss}[thm]{ \textsc{Remark}}
\theoremstyle{remark}
\title{New algebraic  properties of an
amalgamated algebra along an ideal \footnotetext{\hskip -15 pt
MSC: 13A15, 13B99,  14A05.} \footnotetext{\hskip -15 pt  Key
words: \it   idealization, pullback, Zariski topology, $D+M$
construction, Krull dimension, embedding dimension, Cohen-Macaulay, Gorenstein.}}
\author{Marco D'Anna (Universit\`a di Catania)
\and Carmelo A. Finocchiaro (Universit\`a degli Studi, Roma Tre)
\and Marco Fontana (Universit\`a degli Studi, Roma Tre)}
\begin{document}

\maketitle

\vskip -0.8cm
\begin{dedication}
\vspace*{0.5cm}{\small Dedicated to Alberto
Facchini on the occasion of his $60$th birthday}
\end{dedication}

\hfill{\footnotesize \sl }%

\smallskip

\begin{abstract}
Let $f:A \rightarrow B$ be a ring homomorphism and let $J$ be an
ideal of $B$. In this paper, we study the amalgamation of $A$ with
$B$ along $J$ with respect to $f$ (denoted by $\da$), a
construction that provides a general frame for studying  the
amalgamated duplication of a ring along an ideal, introduced by
D'Anna and Fontana in 2007, and other classical constructions
(such as the $A+ XB[X]$, the $A+ XB[\![X]\!]$ and the $D+M$
constructions). In particular, we completely describe the prime
spectrum of the amalgamation $\da$ and, when it is a local
Noetherian ring, we study its embedding dimension and when it
turns to be a Cohen-Macaulay ring or a Gorenstein ring.
\end{abstract}

\noindent{\scriptsize 
 The present version of the manuscript differs from the previous one,  posted on arXiv and published in Comm. Algebra {\bf 44} (2016), 1836--1851, for an appendix  
  --added at the end of the paper-- where we  observe that Proposition \ref{embdimineq}(2) and Theorem \ref{embdimeq} hold under the assumption,  not explicitly declared, that $B=f(A)+J$. Furthermore, in the same appendix, we provide the exact value for the embedding dimension of $\am$, also when $B\neq f(A)+J$, under the hypothesis that $J$ is finitely generated as an ideal  of the ring $f(A)+J$.
Finally, we also deleted Example 4.6. 
}
 
%
\bigskip

\section{Introduction}

Let $A$ and $B$ be commutative rings with unity, let $J$ be
an ideal of $B$ and let $f:A\rightarrow B$ be a ring
homomorphism. In this setting, we can consider the following
subring of $A\times B$:
$$
A\!\Join^f\!\! J := \{(a,f(a)+j) \mid  a \in A, \ j \in J \}
$$called {\it the amalgamation of $A$ with $B$ along $J$ with respect to $f$}.
This construction is a generalization of the amalgamated
duplication of a ring along an ideal (cf., for instance,
\cite{aung}, \cite{basataya},
\cite{c-j-k-m}, \cite{d'a-f-1}, \cite{d'a-f-2},  \cite{m-y} and \cite{sataya}). Moreover, several
classical constructions (such as the $A+XB[X]$, the
$A+XB[\![X]\!]$ and the $D+M$ constructions) can be studied as
particular cases of the amalgamation \cite[Examples 2.5 and
2.6]{dafifoproc} and other classical constructions, such as the
Nagata's idealization (cf.  \cite[Chapter VI,
Section 25]{H}, \cite[page 2]{N}), also called Fossum's trivial extension (cf.
\cite{Fos}), and the CPI extensions (in the sense of Boisen and
Sheldon \cite{bo}) are strictly related to it \cite[Example 2.7
and Remark 2.8]{dafifoproc}.

On the other hand, the amalgamation $\da$ is related to a
construction proposed by D.D. Anderson in \cite{a-06} and
motivated by a classical construction due to Dorroh \cite{do1},
concerning the embedding of a ring without identity in a ring with
identity. An ample introduction on the genesis  of the notion of
amalgamation is given in \cite[Section 2]{dafifoproc}.

One of the key tools for studying $\da$ is based on the fact that
the amalgamation can be studied in the frame of pullback
constructions \cite[Section 4]{dafifoproc} (for a systematic study
of this type of  constructions, cf. \cite{fa}, \cite{F},  \cite{ogoma}).
This point of view
allows us to deepen the study initiated in \cite{dafifoproc} and
continued in \cite{dafifoJPAA} and to provide an ample description
of various properties of $\da$, in connection with the properties
of $A$, $J$ and $f$. More precisely, in \cite{dafifoproc}, we
studied the basic properties of this construction (e.g., we
provided characterizations for $\da$ to be a Noetherian ring, an
integral domain, a reduced ring) and we characterized those
distinguished pullbacks that can be expressed as an amalgamation
and in \cite{dafifoJPAA} we investigated the Krull dimension of
$\da$. In this paper, we study in details its prime spectrum and,
when $\da$ is a local Noetherian ring, some of its invariants
(like the embedding dimension) and relevant properties (like
Cohen-Macaulyness and Gorensteinness).

In particular, after recalling (in Section 2) some basic
properties proved in \cite{dafifoproc}, needed in the present
paper, we provide a complete description of the prime spectrum of
$\da$ (Corollary \ref{spec}) and we characterize when $\da$ is a
local ring (Corollary \ref{localefarf}). In Section 3, we prove
some results on the extensions in $\da$ of ideals of $A$
(Proposition \ref{extension} and Lemma \ref{radical}), that we
will need in the sequel of the paper. In Sections 4 and 5, we
concentrate  our attention on the case when $\da$ is local; in
particular, we give bounds for its embedding dimension
(Proposition \ref{embdimineq}) and we produce classes of rings
$\da$ satisfying the upper or the lower bound (Proposition
\ref{edimlower} and Theorem \ref{embdimeq}). In the last section,
we study when $\da$ is a Cohen-Macaulay  or a Gorenstein ring
(Remarks \ref{regularsequence}, \ref{pr:7.6} and Proposition
\ref{pr:7.7}). Moreover, when $\da$ is Cohen-Macaulay, we
determine its multiplicity (Proposition \ref{pr:7.9}).

\section{The prime spectrum}
Before beginning a systematic study of the ring $\da$, we recall
from our introductory paper to the subject \cite{dafifoproc} the
notation that we will use in the present paper and some basic
properties of this construction.

\begin{prop}
{\rm \cite[Proposition 5.1]{dafifoproc}}\label{inizio}   Let $f:
A\rightarrow B$ be a ring homomorphism,  $\q$ an ideal of $B$  and
set $\da := \{ (a, f(a)+j) \mid a\in A, \ j \in J \}$.
\begin{enumerate}
 \item[\rm (1)]  Let  $\iota := \iota_{\!{_{A, f, J}}}: A\rightarrow \da$ be the natural
the ring homomorphism defined by $\iota(a)  := (a, f(a))$, for all
$a \in A$. The map $\iota$ is an embedding, making $\da$ a ring
extension of $A$.

\item[\rm (2)]  Let $I$ be an ideal of $A$ and set
$ I\!\!\Join^f\!\!\! J :=\{(i, f(i)+j) \mid i\in I, j \in J \}$. Then,
$I\!\!\Join^f\!\!\! J$  is an ideal of $\da$, the composition  of
canonical homomorphisms $A\stackrel{\iota}{\hookrightarrow}
\da\twoheadrightarrow (\da)/(I\!\!\Join^f\!\!\! J)$ is a surjective
ring homomorphism and its kernel coincides with  $I$.

\item[\rm (3)] Let $p_{\!{_A}}: \da \rightarrow A$ and
$p_{\!{_B}}:\da\rightarrow B$ be the natural projections of $\da
\subseteq A \times B$ {into} $A$ and $B$, respectively.  Then,
$p_{\!{_A}}$ is surjective
 and\, $\Ker(p_{\!{_A}})=\{0\}\times J$.
Moreover, $p_{\!{_B}}(\da)=f(A)+ J$ and $\Ker(p_{\!{_B}})=f^{-1}(\q)\times \{0\}$.

\item[\rm(4)] Let $\gamma:\da\rightarrow (f(A)+J)/J$
be the natural ring homomorphism, defined by $(a,f(a)+j)\mapsto f(a)+J$.
Then, $\gamma$ is surjective and
$\Ker(\gamma)=f^{-1}(J)\times J$.

\end{enumerate}
\end{prop}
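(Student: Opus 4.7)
The plan is to verify the four items by direct computation, using mainly that $J$ is an ideal of $B$ and that $f$ is a ring homomorphism. None of the assertions is deep; the only care needed is in the bookkeeping of which coordinate lives in which ring. I will assume throughout that $\da$ has been shown to be a subring of $A\times B$ (checking this is immediate: sums and products of elements of the form $(a,f(a)+j)$ stay in $\da$ because $f$ is a ring homomorphism and $J$ is an ideal of $B$; the identity $(1,f(1))=(1,1_B)$ coming from $f(1)=1$ sits in $\da$ with $j=0$).

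For (1), I would observe that $\iota$ is the restriction to $A$ of the map $a\mapsto(a,f(a))$, which is the composition of the diagonal with $\ude_A\times f$, hence a ring homomorphism; injectivity is immediate by looking at the first coordinate. For (3), surjectivity of $p_{\!{_A}}$ follows because $a=p_{\!{_A}}(\iota(a))$ for every $a\in A$, and the kernel description $\{0\}\times J$ is the definition of $\da$ read off with $a=0$. For $p_{\!{_B}}$, the image is by definition $\{f(a)+j:a\in A,\,j\in J\}=f(A)+J$; an element $(a,f(a)+j)$ lies in $\Ker(p_{\!{_B}})$ iff $f(a)+j=0$, equivalently $f(a)=-j\in J$, i.e.\ $a\in f^{-1}(J)$, and then the second coordinate is forced to be $0$, giving the stated kernel $f^{-1}(J)\times\{0\}$.

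For (2), I would first check that $I\!\!\Join^f\!\!\! J$ is an ideal: closure under addition is clear, and for absorption one computes
\[
(a,f(a)+j)\cdot(i,f(i)+j')=\bigl(ai,\,f(ai)+(f(a)j'+jf(i)+jj')\bigr),
\]
whose first coordinate is in $I$ (since $I$ is an ideal of $A$) and whose second coordinate has remainder in $J$ (since $J$ is an ideal of $B$). For the surjectivity of the composition $\pi\circ\iota:A\to\da/(I\!\!\Join^f\!\!\! J)$, the key remark is that $(0,j)\in I\!\!\Join^f\!\!\! J$ for every $j\in J$, so every coset $(a,f(a)+j)+I\!\!\Join^f\!\!\! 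J$ equals $\iota(a)+I\!\!\Join^f\!\!\! J$. Finally, $a\in\Ker(\pi\circ\iota)$ iff $(a,f(a))=(i,f(i)+j)$ for some $i\in I$, $j\in J$; comparing coordinates forces $a=i\in I$ and then $j=0$, so the kernel is exactly $I$.

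For (4), I would check that $\gamma$ is well-defined (the second coordinate is determined up to an element of $J$ once $a$ is fixed, so $f(a)+J$ is unambiguous), and that it is a ring homomorphism (immediate from $f$ being one and from $J$ being an ideal). Surjectivity is by construction. For the kernel, $\gamma(a,f(a)+j)=0$ means $f(a)\in J$, i.e.\ $a\in f^{-1}(J)$; and given such an $a$, the second coordinate $f(a)+j$ ranges over all of $J$ as $j$ does, yielding $\Ker(\gamma)=f^{-1}(J)\times J$ as sets. The main potential obstacle is purely notational: one must remember that $f^{-1}(J)\times J$ here denotes the set of pairs, seen inside $\da$, rather than a categorical product of rings, and that $\da$ need not split as such a product.
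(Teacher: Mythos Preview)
Your proof is correct; each of the four items is verified by the straightforward computations you outline, and there are no gaps. Note that the paper itself does not prove this proposition but merely recalls it from \cite[Proposition 5.1]{dafifoproc}, so there is no in-paper argument to compare against; your direct verification is exactly the sort of elementary check one would expect for these statements.
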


Let $f:A\rightarrow B$   be a ring homomorphism and $\q$ an
ideal of $B$. In the present paper, we intend to further
investigate the algebraic properties of the ring $\da$, in
relation with those of $A,\ B,\ J$ and $f$. Recall that, in
\cite{dafifoproc}, we have shown that the ring $\da$ can be
represented as a pullback of natural ring homomorphisms and, using
the notion of ring retraction, we have characterized which type of
pullbacks are exactly of the form $\da$. In this paper, we will
make an extensive use of that idea for deepening the study of the
ring $\da$.


\medskip

\begin{oss}\label{de} \rm
(a)
Recall that, if $\alpha:A\rightarrow C,\,\,\,
\beta:B\rightarrow C$ are ring homomorphisms, the subring
$D:=\alpha\times_{_C}\beta:=\{(a,b)\in A\times B \mid
\alpha(a)=\beta(b)\}$ of $A\times B$ is called the
\textit{pullback} (or \textit{fiber product}) of $\alpha$ and
$\beta$.
We denote by   $p_{\!{_A}}$ (respectively, $p_{\!{_B}}$) the
restriction to $\alpha\times_{_C}\beta$ of the projection of
$A\times B$  onto $A$ (respectively, $B$).

The following statement is a straightforward consequence of the definitions:
Let $f:A\rightarrow B$ be a ring homomorphism and $\q$ be an ideal
of $B$. If $\pi:B\rightarrow B/J$ is the canonical projection and
$\breve f:=\pi\circ f$, then $\da=\breve f\times_{_{B/J}}\pi$.

(b)
Recall that a ring homomorphism $r:B\rightarrow A$
is called {\it a ring retraction} if there exists an (injective)
ring homomorphism $i:A\rightarrow B$ such that $r\circ i=\ude_A$.
In this case, we say also that $A$ is {\it a retract of} $B$.
By {\cite[Remark 4.6]{dafifoproc}}, with the previous notation, we have that
 $A$ is a retract of $\da$ and the map
$p_{\!{_A}}:\da\rightarrow A$, defined in Proposition
\ref{inizio}(3), is a ring retraction. In fact, we have
$p_{\!{_A}}\circ \iota= \ude_{\!{_A}}$, where $\iota$ is the ring
embedding of $A$ into $\da$ (Proposition \ref{inizio}(1)).

(c) The pullbacks of the form $\da$ form a distinguished subclass of
the class of pullbacks of ring homomorphisms, as described in
{\rm \cite[Proposition 4.7]{dafifoproc}}.
Let $A,B,C,\alpha,\beta,p_{\!{_A}},p_{\!{_B}}$ be as in
(a). Then,
 $p_{\!{_A}}: D \ (= \alpha\times_{_C}\beta) \rightarrow A$ is a ring retraction
if and only if
 there exists an ideal $J$ of $B$ and a ring homomorphism
$f:A\rightarrow B$ such that $D\cong \da$.

(d) Note that, using the notation in (a), we are not making any assumption on the ring
homomorphism $\alpha: A \rightarrow C$ nor on the homomorphism
$\breve f:=\pi\circ f: A \rightarrow B/J$. In \cite{aam} the
authors consider a new construction, called connected sum of local
rings, obtained by taking a quotient of a pullback for which both
the homomorphisms $\alpha$ and $\beta$ are surjective. A
particular case of this type of pullback is the amalgamated
duplication $A\!\Join\!I$, where $A$ is a local ring and $I$ an
ideal of $A$ (see \cite{d'a-f-1} and \cite{d'a-f-2}).

(e) Note that
the amalgamation $\da$, even in the local case, may not be fully
re-conducted to a pullback for which both the homomorphisms
$\alpha$ and $\beta$ are surjective. However, changing the data,
and considering $B':= f(A) +J$, $J$ as an ideal of $B'$, and  $f'
: A \rightarrow B'$ acting as $f$, it is easy to see that $\da =
\dasu$ and $\dasu$ is a pullback of $\pi': B' \rightarrow B'/J$
and $\breve{f'}:=\pi'\circ f' : A \rightarrow B'/J$ (i.e., $\dasu
= \breve{f'} \times_{B'/J} \pi'$),
which are now both surjective. But, this is only apparently a
simplification of the given construction, since the problem of
studying  $\da$ from the data $A, B, J, f$ is transformed into the
problem of studying $\dasu$ and the ring inclusion $f(A)+J
\hookrightarrow B$, and the last problem presents the same level
of complexity of a direct investigation of the given construction
(see for instance \cite[Section 5]{dafifoproc} and \cite[Section
4]{dafifoJPAA}).

\end{oss}


Let  $f: A\rightarrow B$ be a ring homomorphism, and set
$X:=\sss(A), \ Y:=\sss(B)$. Recall that $f^*:Y\rightarrow X$
denotes the continuous map (with respect to the Zariski
topologies)  naturally associated to $f$  (i.e.,  $f^*(Q):=
f^{-1}(Q)$ for all $Q\in Y$).  Let  $S$ be a subset of $A$. Then,
as usual,   $V_X(S)$, or simply $V(S)$, if no confusion can arise,
denotes the closed subspace of $X$, consisting of all prime ideals
of $A$ containing $S$.

In the next lemma we recall the notation and some basic properties
of pullback constructions that we will use in the present paper.
We refer to the paper by Fontana \cite{F}, since the subsequent
work on pullbacks by Facchini \cite{fa} and, in the Noetherian
setting, by Ogoma \cite{ogoma} is not relevant to our study.

\begin{lem}\label{fon} \rm \cite[Theorem 1.4]{F}  \it
With the notation of Remark \ref{de} (a),
set $X:=\emph{\sss}(A)$,
\ $Y:=\emph{\sss}(B),\ Z:=\emph{\sss}(C)$, and
$W:=\emph{\sss}(D)$. Assume that $\beta$ is surjective. Then, the
following statements hold.
\begin{enumerate}
\item [\rm (1)] If $H \in W\w V(\Ker(p_{\!{_A}}))$, then there is
a unique prime ideal $Q$ of $B$ such that $p_{\!{_B}}^{-1}(Q)= H$.
Moreover, $Q\in Y\w V(\Ker(\beta))$ and $D_H\cong B_Q$, under the
canonical homomorphism induced by $p_{\!{_B}}$.
\item [\rm (2)] The continuous map $p_{\!{_A}}^*$ is a closed embedding of $X$ into $W$.
Thus $X$ is homeomorphic to its image, $V(\Ker(p_{\!{_A}}))$, under $p_{\!{_A}}^*$.
\item [\rm (3)] The restriction of the continuous map $p_{\!{_B}}^*$ to $Y\w V(\Ker(\beta))$
is an homeo\-morphism of $Y\w V(\Ker(\beta))$ with $W\w
V(\Ker(p_{\!{_A}}))$  (hence,  \emph{a fortiori}, it is an
isomorphism of partially ordered sets).
\end{enumerate}
In particular, the prime ideals of $D$ are of the type
$p_{\!{_A}}^{-1}(P)$ or $p_{\!{_B}}^{-1}(Q)$, where $P$ is any
prime ideal of $A$ and $Q$ is a prime ideal of $B$, with
$Q\nsupseteq \Ker(\beta)$.
\end{lem}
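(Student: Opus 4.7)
The plan is first to establish the fundamental algebraic dichotomy $\Ker(p_{\!{_A}}) \cdot \Ker(p_{\!{_B}}) = 0$ inside $D$, and then to analyze the two resulting classes of primes separately. A direct computation using the definition of the pullback shows $\Ker(p_{\!{_A}}) = \{0\} \times \Ker(\beta)$ and $\Ker(p_{\!{_B}}) = \Ker(\alpha) \times \{0\}$; their product in $D\subseteq A\times B$ vanishes since $(0,j)(a,0)=(0,0)$. Consequently, every prime $H \in W$ must contain at least one of these kernels, and in particular $H \notin V(\Ker(p_{\!{_A}}))$ forces $\Ker(p_{\!{_B}}) \subseteq H$.

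Part (2) is then almost immediate: surjectivity of $\beta$ implies that $p_{\!{_A}}$ is surjective with kernel $\Ker(p_{\!{_A}})$, yielding an isomorphism $D/\Ker(p_{\!{_A}}) \cong A$. The standard order-preserving correspondence between primes of $A$ and primes of $D$ containing $\Ker(p_{\!{_A}})$ identifies $\spec(A)$ with its image $V(\Ker(p_{\!{_A}}))$ under $p_{\!{_A}}^*$, giving the closed embedding.

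For part (1), fix $H \in W \w V(\Ker(p_{\!{_A}}))$. By the dichotomy $\Ker(p_{\!{_B}}) \subseteq H$, and since $p_{\!{_B}}$ induces an isomorphism $D/\Ker(p_{\!{_B}}) \cong B' := p_{\!{_B}}(D) = \beta^{-1}(\alpha(A))$, we obtain $H = p_{\!{_B}}^{-1}(Q')$ for a unique $Q' \in \spec(B')$, necessarily with $Q' \not\supseteq \Ker(\beta)$. The crucial step is now to lift $Q'$ to a prime $Q$ of $B$ with $Q \cap B' = Q'$ and $B_Q \cong B'_{Q'}$. This rests on the observation that $jb \in \Ker(\beta) \subseteq B'$ for every $j \in \Ker(\beta)$ and every $b \in B$, so that in $B_j$ one can write $b = (jb)/j$; hence for any $j_0 \in \Ker(\beta) \w Q'$ the inclusion $B' \hookrightarrow B$ becomes an isomorphism after inverting $j_0$. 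Contraction along this inclusion then produces the required bijection between primes of $B$ avoiding $j_0$ and primes of $B'$ avoiding $j_0$, and simultaneously yields the isomorphism $D_H \cong B'_{Q'} \cong B_Q$ asserted in (1).

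Part (3) is the topological refinement of this bijection: the inverse of $p_{\!{_B}}^*$ on $W \w V(\Ker(p_{\!{_A}}))$ is the composition of the classical correspondence $\{H \supseteq \Ker(p_{\!{_B}})\} \leftrightarrow \spec(B')$ with the contraction map $\spec(B) \w V(\Ker(\beta)) \to \spec(B') \w V(\Ker(\beta))$, both homeomorphisms with respect to the Zariski topologies. The main obstacle of the whole argument is precisely this prime-extension step: although it can be viewed as a localization result, verifying that contraction furnishes a well defined inverse — and in particular that no prime of $B'$ missing $\Ker(\beta)$ gets lost — requires one to genuinely exploit the hypothesis $\Ker(\beta) \subseteq B'$ together with the surjectivity of $\beta$, which is exactly the place where the asymmetry of the two projections $p_{\!{_A}}, p_{\!{_B}}$ in a non-symmetric pullback makes itself felt.
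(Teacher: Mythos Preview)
Your argument is correct, but note that the paper itself does not prove this lemma: it is quoted verbatim from \cite[Theorem~1.4]{F} and used as a black box, so there is no in-paper proof to compare against. Your approach---the orthogonality $\Ker(p_{\!{_A}})\cdot\Ker(p_{\!{_B}})=0$, the surjectivity of $p_{\!{_A}}$ for part~(2), and the conductor-type observation $\Ker(\beta)\cdot B\subseteq\Ker(\beta)\subseteq B'$ yielding $B'_{j_0}\cong B_{j_0}$ for parts~(1) and~(3)---is exactly the standard one and matches what Fontana does in the cited reference.

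One step deserves an extra line: the isomorphism $D_H\cong B'_{Q'}$. From $\Ker(p_{\!{_B}})\subseteq H$ alone one obtains only a surjection $D_H\twoheadrightarrow(D/\Ker(p_{\!{_B}}))_{Q'}=B'_{Q'}$; to see it is an isomorphism you must observe that $\Ker(p_{\!{_B}})$ vanishes in $D_H$, which follows from your dichotomy since any $(0,j_0)\in\Ker(p_{\!{_A}})\setminus H$ becomes a unit in $D_H$ and annihilates all of $\Ker(p_{\!{_B}})$. Also, a small inaccuracy in your closing commentary: the surjectivity of $\beta$ is used only to make $p_{\!{_A}}$ surjective in part~(2); the prime-extension step for parts~(1) and~(3) needs just $\Ker(\beta)\subseteq B'$, which holds automatically because $0\in\alpha(A)$.
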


The following corollary is a direct consequence of Lemma \ref{fon}.

\begin{cor}\label{pmax}
With the notation of Remark \ref{de} (a), assume that $\beta$ is
surjective. Let $H$ be a prime ideal of $D \
(=\alpha\times_{_C}\beta)$.
\begin{enumerate}
\item [\rm(1)] Assume that $H$ contains $\Ker(p_{\!{_A}})$. Let
$P$ be the only prime ideal of $A$ such that $H=p_{\!{_A}}^*(P)$
(Lemma \ref{fon}(2)). Then,  $H$ is a maximal ideal of $D$ if
and only if $P$ is a maximal ideal of $A$.
\item [\rm(2)] Assume that $H$ does not contain $\Ker(p_{\!{_A}})$. Let
$Q$ be the only prime ideal of $B$ ($Q \notin V(\Ker(\beta))$)
such that $p_{\!{_B}}^*(Q)=H$ (Lemma \ref{fon}(1)). Then, $H$ is
a maximal ideal of $D$ if and only if $Q$ is a maximal ideal of
$B$.
\item [\rm(3)]
$D \ (=\alpha\!\times_{_C}\!\beta)$ is a local
ring if and only if $A$ is a local ring and ${\rm
Ker}(\beta) $
is contained in the Jacobson radical ${\rm Jac}(B)$. In particular, if $A$ and $B$
are local rings, then $D$ is a local ring.  Moreover, if $D$ is a
local ring and $M$ is the only maximal ideal of $A$, then
 $\{p_{\!{_A}}^{-1}(M)\} =
\mmm(D)$.
\end{enumerate}
\end{cor}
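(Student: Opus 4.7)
The plan is to deduce all three parts directly from Lemma \ref{fon}, which splits $\spec(D)$ into $V(\Ker(p_{\!{_A}}))$ and its complement and describes each piece. Part (1) is immediate: since $\beta$ is surjective, so is $p_{\!{_A}}$, and the quotient $D/\Ker(p_{\!{_A}}) \cong A$ identifies $\spec(A)$ with $V(\Ker(p_{\!{_A}}))$ as posets. Any prime of $D$ containing $p_{\!{_A}}^{-1}(P)$ automatically contains $\Ker(p_{\!{_A}})$, so maximality of $p_{\!{_A}}^{-1}(P)$ in $D$ coincides with maximality of $P$ in $A$.

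For part (2), I would first prove ``$Q$ maximal $\Rightarrow H$ maximal'' by exhibiting an explicit isomorphism $D/H \cong B/Q$. The key observation is that $Q \in \mmm(B)$ with $Q \not\supseteq \Ker(\beta)$ forces $Q + \Ker(\beta) = B$, so that the map $D \to B/Q$, $(a,b) \mapsto b + Q$, whose kernel is $p_{\!{_B}}^{-1}(Q) = H$, is surjective: given $b_0 \in B$, write $b_0 = q + k$ with $q \in Q$ and $k \in \Ker(\beta)$, and note that $(0, k) \in D$ maps to $b_0 + Q$. As $B/Q$ is a field, $H$ is maximal. For the converse, suppose $Q \subsetneq Q'$ is prime in $B$, and split into two cases. (i) If $Q' \not\supseteq \Ker(\beta)$, the order-isomorphism in Lemma \ref{fon}(3) yields $H \subsetneq p_{\!{_B}}^{-1}(Q')$. (ii) If $Q' \supseteq \Ker(\beta)$, set $P' := \alpha^{-1}(\beta(Q'))$, a prime of $A$ since $\beta(Q') = Q'/\Ker(\beta)$ is prime in $C$. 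Then $p_{\!{_A}}^{-1}(P') \supseteq H$, because $(a,b) \in H$ gives $\alpha(a) = \beta(b) \in \beta(Q) \subseteq \beta(Q')$, so $a \in P'$; strictness is witnessed by any $(0,k) \in \Ker(p_{\!{_A}}) \setminus H$, which exists exactly because $\Ker(\beta) \not\subseteq Q$. Either way, $H$ fails to be maximal, contradicting the hypothesis.

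Part (3) is then a bookkeeping argument. By (1) and (2), $\mmm(D)$ is the disjoint union of $\{p_{\!{_A}}^{-1}(M) : M \in \mmm(A)\}$ (all members contain $\Ker(p_{\!{_A}})$) and $\{p_{\!{_B}}^{-1}(Q) : Q \in \mmm(B),\ Q \not\supseteq \Ker(\beta)\}$ (no member contains $\Ker(p_{\!{_A}})$). Locality of $D$ is equivalent to this union being a singleton, i.e., to $|\mmm(A)| = 1$ together with the second set being empty, the latter being precisely $\Ker(\beta) \subseteq \Jac(B)$; in that case the explicit form $\mmm(D) = \{p_{\!{_A}}^{-1}(M)\}$ is read off. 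The ``in particular'' clause is immediate since in a local ring $B$ every proper ideal, including $\Ker(\beta)$, lies in $\Jac(B)$.

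The main obstacle is the sub-case $Q' \supseteq \Ker(\beta)$ in the converse of part (2): Lemma \ref{fon}(3) no longer applies there, so a prime of $D$ strictly between $H$ and $D$ must be built by hand, which is done by pulling $\beta(Q')$ back to $A$ through $\alpha^{-1}$ and exploiting that $\Ker(p_{\!{_A}}) \not\subseteq H$.
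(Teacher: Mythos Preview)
Your proof is correct. The paper does not supply a proof for this corollary, stating only that it ``is a direct consequence of Lemma~\ref{fon}''; your argument is precisely the kind of verification one would carry out to justify that claim, and in particular you correctly isolate the only step that is not entirely immediate from Lemma~\ref{fon}---namely the converse direction of~(2) when the larger prime $Q'$ contains $\Ker(\beta)$, where you manufacture the comparison prime $p_{\!{_A}}^{-1}\bigl(\alpha^{-1}(\beta(Q'))\bigr)$ by hand.
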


As a consequence of the previous results we can now easily describe the structure of
the prime spectrum of the ring $\da$. The details of the proof are omitted.

\begin{cor}\label{spec}
 With the notation of Proposition \ref{inizio},  set $X:=\sss(A)$, $Y:=\sss(B)$,  and $W:=\sss(\da)$,
$J_0 :=\{0\}\times J \ (\subseteq \da)$, and $J_1:=f^{-1}(J)\times \{0\}$. For all $P \in X$ and $Q \in Y$, set:
$$
\begin{array}{rl}
P^{\prime_{_{\! f}}}:= &\hskip -4pt  P\!\Join^f\!\! J :=\{(p, f(p)+j ) \mid p\in \p,\ j\in J \}\,,\\
\overline Q^{_{_{ f}}}:=&\hskip -4pt \{(a, f(a)+j) \mid a\in A,\  j \in J,\ f(a)+ j \in Q\}\,.
\end{array}
$$
Then, the following statements hold.
\begin{enumerate}
\item [\rm (1)] The map $P \mapsto P^{\prime_{_{\! f}}}$
establishes a closed embedding of $X$ into  $W$,
so its image, which coincides with $V(J_0)$, is homeomorphic to
$X$.
\item [\rm (2)] The map $Q \mapsto\overline Q^{_{_{f}}} $ is {{a}} homeomorphism of $Y\w V(J)$ onto
$W\w V(J_0)$.

\item [\rm (3)] The prime ideals of $\da$ are of the type\
$P^{\prime_{_{\! f}}}$ or \ $\overline Q^{_{_{ f}}}$\!,\,  for
$P$ varying in $ X$ and $Q$ in $Y\w V(\q)$.

\item [\rm (4)]
$W=V(J_0)\cup V(J_1)$
and the set $V(J_0)\cap V(J_1)$ is homeomorphic to $\sss((f(A)+J)/J)$,
via the continuous map associated to the natural ring homomorphism
$\gamma:\da\rightarrow (f(A)+J)/J$, $(a,f(a)+j)\mapsto f(a)+J$.
In particular, we have that the closed  subspace $V(J_0)\cap V(J_1) $ of $W$ is
homeomorphic to the closed subspace $V(J)$ of $Y(=\sss(B))$, when $f$ is surjective.
\end{enumerate}
\end{cor}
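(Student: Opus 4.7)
The plan is to deduce everything from the pullback description $\da = \breve f \times_{B/J} \pi$ recorded in Remark \ref{de}(a), together with the general pullback spectrum result Lemma \ref{fon} and its Corollary \ref{pmax}. Since $\pi: B \to B/J$ is surjective, the hypothesis of Lemma \ref{fon} is satisfied with the roles $\alpha = \breve f$, $\beta = \pi$, so that $\Ker(\beta) = J$ and $\Ker(p_{\!{_A}}) = \{0\}\times J = J_0$ (this last fact is Proposition \ref{inizio}(3)).

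For (1) and (2), I would simply identify the abstract maps of Lemma \ref{fon} with the explicit maps in the statement. A direct computation gives $p_{\!{_A}}^{-1}(P) = \{(a,f(a)+j) : a\in P,\ j\in J\} = P^{\prime_{_{\!f}}}$ and $p_{\!{_B}}^{-1}(Q)=\{(a,f(a)+j) : f(a)+j\in Q\} = \overline Q^{_{_{f}}}$. Then Lemma \ref{fon}(2) yields that $P\mapsto P^{\prime_{_{\!f}}}$ is a closed embedding of $X$ onto $V(J_0)$, and Lemma \ref{fon}(3), with $V(\Ker(\beta))=V(J)$, yields that $Q\mapsto \overline Q^{_{_{f}}}$ is a homeomorphism of $Y\setminus V(J)$ onto $W\setminus V(J_0)$. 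Statement (3) is then just the disjoint union of the two images.

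For (4), the key observation is that $J_0\cdot J_1 = (0)$ in $\da$, because $(0,j)(i,0)=(0,0)$ for $i\in f^{-1}(J)$ and $j\in J$. Consequently every prime ideal $H\in W$ contains $J_0$ or $J_1$, which gives $W=V(J_0)\cup V(J_1)$. For the intersection, I would observe that $V(J_0)\cap V(J_1)=V(J_0+J_1)$ and that $J_0+J_1 = f^{-1}(J)\times J$, which by Proposition \ref{inizio}(4) is precisely $\Ker(\gamma)$; thus $(\da)/(J_0+J_1)\cong (f(A)+J)/J$ and the induced map $\gamma^*$ provides the claimed homeomorphism $V(J_0)\cap V(J_1)\cong \sss((f(A)+J)/J)$. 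Finally, when $f$ is surjective we have $f(A)+J=B$, so $(f(A)+J)/J=B/J$ and the standard order-isomorphism between $\sss(B/J)$ and $V(J)\subseteq Y$ finishes the last assertion.

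There is no serious obstacle: once the pullback dictionary is set up, each item is a short verification. The only point that requires a small independent argument outside of Lemma \ref{fon} is the decomposition $W=V(J_0)\cup V(J_1)$ in (4), which rests on the nilpotent-type identity $J_0 J_1 = 0$; everything else is bookkeeping translating the abstract pullback maps $p_{\!{_A}}^*$ and $p_{\!{_B}}^*$ into the explicit formulas for $P^{\prime_{_{\!f}}}$ and $\overline Q^{_{_{f}}}$.
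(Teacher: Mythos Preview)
Your proposal is correct and is precisely the argument the paper has in mind: the authors state that Corollary \ref{spec} is ``a consequence of the previous results'' and omit all details, and your derivation via the pullback identification $\da=\breve f\times_{B/J}\pi$ together with Lemma \ref{fon}, Proposition \ref{inizio}(3),(4), and the explicit computation of $p_{\!{_A}}^{-1}(P)$ and $p_{\!{_B}}^{-1}(Q)$ is exactly how those details are meant to be filled in. Your observation $J_0J_1=(0)$ for the decomposition in (4) is a clean way to handle the only point not already contained verbatim in Lemma \ref{fon}.
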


The following example provides a geometrical illustration of some of the material presented above.
\begin{ex}
Let $K$ be an algebraically closed field and $X, Y$ two
indeterminates over $K$.  Set $A := K[X, Y]$, $B:=  K[X]$ and $f:
K[X, Y] \rightarrow   K[X] $ defined by $Y \mapsto 0$ and $X
\mapsto X$.  Let $J:= XK[X]$. We want to study the ring
$K[X,Y]\!\Join^{f}\!\!J$
(note that, from a
geometrical point of view, $f^*$ determines the inclusion of the
line defined by the equation $Y=0$ into the affine space $\mathbb
A_K^2$.)

 According to the notation of Corollary \ref{spec}, we
have $V(J_1)\cong \sss(K[Y])$. Moreover, the projection
$p_{\!{_B}}$ of $\da$ into $B$ is surjective, since $f$ is
surjective, and its kernel is $J_1$ (see Proposition
\ref{inizio}). Thus $\sss(\da/J_1)\cong V(J_1)\cong
\sss(B)=\sss(K[X])$. We have also $V(J_1)\cap V(J_2)\cong
\sss(B/J)=\sss(K)$, by Corollary \ref{spec}(4). Then, $\da$ is the
coordinate ring of the union of a plane  (i.e., $\sss(K[X, Y])$)
and a line (i.e., $\sss(K[X])$) with one common point (i.e.,
$\sss(K)$). Note that, in this case, the ring $\da$ can be also
presented by a quotient of a polynomial ring. Indeed, since $f$ is
surjective and  $B/J\cong K$, by a standard argument we easily
obtain that $\da$ is isomorphic to $K[X,Y,Z]/(ZX,YZ)$.
\end{ex}

 If we specialize  Corollary \ref{pmax}
to the case of the construction $\da$, then we
obtain the following:

 \begin{cor}\label{localefarf}
 We preserve the notation of
  Corollary \ref{spec}.
\begin{enumerate}
\item [\rm(1)] Let $P \in X$. Then, $P^{\prime_{_{\! f}}}$
 is a maximal ideal of $\da$ if and only if $P$ is a maximal ideal of $A$.

\item [\rm(2)] Let $Q$ be a prime ideal of $B$ not containing $J$. Then,
$\overline Q^{_{_{f}}}$ is a maximal ideal of $\da$ if and only if
$Q$ is a maximal ideal of $B$.

 In particular,
$ \mmm(\da)\!=\!\{P^{{\prime_{_{\! f}}}} \mid  P \in\mmm(A)\} \cup
\{\overline Q^{_{_{f}}} \mid Q \in \mmm(B)\w V(\q)\}.$
  \end{enumerate}
  \begin{enumerate}

\item[\rm(3)]
  $\da$ is a local ring
  if and only if
$A$ is a local ring and $\q\subseteq {\rm Jac}(B)$.

In particular, if $M$ is the unique maximal ideal of $A$, then
$M^{\prime_{_{\! f}}}= M \!\!\Join^f\!\!\!J$ is the unique maximal
ideal of $\da$.
\end{enumerate}
\end{cor}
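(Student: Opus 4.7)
\emph{Proof plan.} The whole statement is a translation of Corollary \ref{pmax} through the pullback presentation of $\da$ recorded in Remark \ref{de}(a). Writing $\pi:B\to B/J$ for the canonical projection and $\breve f:=\pi\circ f$, we have $\da=\breve f\times_{B/J}\pi$ with $\pi$ surjective and $\Ker(\pi)=J$, so Lemma \ref{fon} and its Corollary \ref{pmax} apply with $\alpha:=\breve f$ and $\beta:=\pi$. By Proposition \ref{inizio}(3), $\Ker(p_{\!{_A}})=J_0=\{0\}\times J$, and by Corollary \ref{spec}(1)--(2) the bijections $P\mapsto P^{\prime_{_{\!f}}}$ and $Q\mapsto \overline Q^{_{_{f}}}$ are just $P\mapsto p_{\!{_A}}^{-1}(P)$ and $Q\mapsto p_{\!{_B}}^{-1}(Q)$. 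With these identifications every part of the statement becomes a direct citation.

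For (1), $P^{\prime_{_{\!f}}}=p_{\!{_A}}^{-1}(P)$ automatically contains $\Ker(p_{\!{_A}})=J_0$, so Corollary \ref{pmax}(1) yields that $P^{\prime_{_{\!f}}}\in\mmm(\da)$ iff $P\in\mmm(A)$. For (2), I would first observe that $J_0\subseteq \overline Q^{_{_{f}}}$ iff $(0,j)\in\overline Q^{_{_{f}}}$ for every $j\in J$, iff $j\in Q$ for every $j\in J$, iff $Q\in V(J)$. Hence, under the assumption $Q\notin V(J)$, the ideal $\overline Q^{_{_{f}}}$ does not contain $\Ker(p_{\!{_A}})$, and Corollary \ref{pmax}(2) gives that $\overline Q^{_{_{f}}}\in\mmm(\da)$ iff $Q\in\mmm(B)$. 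Combining these two bijective correspondences with the complete list of primes in Corollary \ref{spec}(3) yields the announced description of $\mmm(\da)$.

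Part (3) is then immediate from Corollary \ref{pmax}(3): $\da$ is local iff $A$ is local and $\Ker(\pi)=J\subseteq\Jac(B)$; and in the local case the same corollary identifies the unique maximal ideal as $p_{\!{_A}}^{-1}(M)$, which in our notation is $M^{\prime_{_{\!f}}}=M\!\Join^f\!\!J$. I do not anticipate any real obstacle: the only step that is not pure bookkeeping is the equivalence $J_0\subseteq\overline Q^{_{_{f}}}\Leftrightarrow J\subseteq Q$ used to invoke Corollary \ref{pmax}(2), and this is a one-line check. Accordingly the write-up can legitimately be very short, essentially citing the pullback corollary and leaving the translation to the reader (as indeed the authors announce just before the statement).
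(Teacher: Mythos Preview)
Your proposal is correct and is precisely the paper's approach: the authors state just before the corollary that it is obtained by specializing Corollary \ref{pmax} to the amalgamation, and they give no further proof. Your write-up simply spells out that specialization (identifying $P^{\prime_{_{\!f}}}=p_{\!{_A}}^{-1}(P)$, $\overline Q^{_{_{f}}}=p_{\!{_B}}^{-1}(Q)$, $\Ker(p_{\!{_A}})=J_0$, $\Ker(\beta)=J$) together with the one-line check needed for part (2); there is nothing to add.
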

The following result, whose proof is straightforward, provides a
description of the minimal prime ideals of $\da$.
\begin{cor}
With the  notation of Corollary \ref{spec}, set
$$
\mathcal X:=\mathcal X_{(f,J)}:=\bigcup_{Q\in \spec(B)\setminus V(J)}V(f^{-1}(Q+J))
$$
The following properties hold.
\begin{enumerate}[\rm (1)]
\item The map $Q\mapsto \overline Q^{_{_{f}}}$
establishes a homeomorphism of ${\rm Min}(B)\setminus V(J)$ with
${\rm Min}(\da)\setminus V(J_0)$.
\item The map $P\mapsto P^{{\prime_{_{\! f}}}}$ establishes a
homeomorphism of ${\rm Min}(A)\setminus \mathcal X$ with\,\,\, ${\rm Min}(\da)\cap V(J_0)$.
\end{enumerate}
\end{cor}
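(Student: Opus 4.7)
The plan is to exploit Corollary \ref{spec} completely. Every prime of $\da$ is of exactly one of two forms: $P^{\prime_{_{\! f}}}$ (with $P\in\spec(A)$; these are precisely the primes containing $J_0$) or $\overline Q^{_{_{f}}}$ (with $Q\in\spec(B)\setminus V(J)$; these are precisely the primes not containing $J_0$). Moreover, the two displayed maps in the statement are restrictions of the homeomorphisms in Corollary \ref{spec}(1)--(2). Consequently, the topological assertions will be automatic once the set-theoretic part is settled, and the problem reduces to verifying that the $Q$-side bijection sends $\Min(B)\setminus V(J)$ onto $\Min(\da)\setminus V(J_0)$ and that the $P$-side bijection sends $\Min(A)\setminus \mathcal X$ onto $\Min(\da)\cap V(J_0)$.

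The key input is a short inclusion dictionary:
\begin{enumerate}
\item[(i)] $P_1^{\prime_{_{\! f}}}\subseteq P_2^{\prime_{_{\! f}}}$ iff $P_1\subseteq P_2$;
\item[(ii)] $\overline{Q_1}^{_{_{f}}}\subseteq \overline{Q_2}^{_{_{f}}}$ iff $Q_1\subseteq Q_2$, for $Q_1,Q_2\in\spec(B)\setminus V(J)$;
\item[(iii)] $\overline Q^{_{_{f}}}\subseteq P^{\prime_{_{\! f}}}$ iff $f^{-1}(Q+J)\subseteq P$;
\item[(iv)] $P^{\prime_{_{\! f}}}\not\subseteq \overline Q^{_{_{f}}}$ whenever $Q\notin V(J)$, since testing the inclusion on elements $(0,j)\in J_0$ would force $J\subseteq Q$.
\end{enumerate}
Items (i)--(ii) repackage Lemma \ref{fon}(2)--(3), while (iii)--(iv) are immediate from the definitions.

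With the dictionary in hand, part (1) is quick: by (iv), no prime of the first type sits below $\overline Q^{_{_{f}}}$, so $\overline Q^{_{_{f}}}$ is minimal in $\da$ iff it is minimal in $W\setminus V(J_0)$; by (ii), this means $Q$ is minimal among primes of $B$ not containing $J$, and since $Q\notin V(J)$ forces every $Q''\subsetneq Q$ to satisfy $Q''\notin V(J)$, this is equivalent to $Q\in\Min(B)\setminus V(J)$. For part (2), $P^{\prime_{_{\! f}}}$ is minimal in $\da$ iff no strictly smaller prime of either type exists: by (i), the absence of $(P'')^{\prime_{_{\! f}}}\subsetneq P^{\prime_{_{\! f}}}$ says $P\in\Min(A)$, and by (iii), the absence of $\overline Q^{_{_{f}}}\subsetneq P^{\prime_{_{\! f}}}$ with $Q\in\spec(B)\setminus V(J)$ says $f^{-1}(Q+J)\not\subseteq P$ for every such $Q$, i.e., $P\notin\mathcal X$.

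The main obstacle to anticipate is the asymmetry encoded in (iii)--(iv): primes not containing $J_0$ can legitimately sit below primes containing $J_0$, but not conversely. This is precisely the phenomenon that forces the extra exclusion set $\mathcal X$ in part (2), while leaving part (1) free of any analogous correction. Once this asymmetry is isolated, both statements follow immediately, and the homeomorphism claims are inherited by restriction from Corollary \ref{spec}.
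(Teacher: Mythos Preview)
Your proof is correct and is precisely the straightforward argument the paper has in mind; the paper in fact omits the proof entirely, declaring it a direct consequence of the description of $\spec(\da)$ given in Corollary \ref{spec}. Your inclusion dictionary (i)--(iv) makes explicit exactly the order-theoretic content of Lemma \ref{fon} and Corollary \ref{spec} needed to identify the minimal primes, and the asymmetry you isolate in (iii)--(iv) is indeed the reason the correction term $\mathcal X$ appears only in part (2).
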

After describing the topological and ordering properties of the
prime spectrum of the ring $\da$, we now describe the
localizations of $\da$ at each of its prime ideals.
\begin{prop}
With the notation of Proposition \ref{inizio} and Corollary \ref{spec}, the following statements hold.
\begin{enumerate}[\rm (1)]
\item For any prime ideal $Q\in Y\setminus V(J)$, the ring
$(\da)_{\overline Q^{_{_{f}}}}$ is canonically isomorphic to
$B_Q$.
\item For any prime ideal $P\in X\setminus V(f^{-1}(J))$,
the localization $(\da)_{P^{{\prime_{_{\! f}}}}}$ is canonically
isomorphic to $A_P$.
\item Let $P$ be a prime ideal of $A$ containing $f^{-1}(J)$.
Consider the multiplicative subset $S:=S_{(f,P,J)}:=f(A\setminus
P)+J$ of $B$ and set $B_S:=S^{-1}B$ and $J_S:=S^{-1}J$. If
$f_P:A_P\longrightarrow B_S$ is the ring homomorphism induced by
$f$, then the ring $(\da)_{P^{{\prime_{_{\! f}}}}}$ is canonically
isomorphic to $A_P\Join^{f_P}J_S$.
\end{enumerate}
\end{prop}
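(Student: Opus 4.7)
The plan is to treat the three cases separately, all leveraging the pullback description $\da=\breve f\times_{B/J}\pi$ from Remark \ref{de}(a), in which $\pi$ is surjective so Lemma \ref{fon} applies with $\beta=\pi$. Throughout I use that $\Ker(p_A)=J_0=\{0\}\times J$, by Proposition \ref{inizio}(3).

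Part (1) would be immediate from Lemma \ref{fon}(1). For $Q\in Y\setminus V(J)$, picking $j\in J\setminus Q$ shows $(0,j)\in J_0\setminus\overline Q^{_{_{f}}}$, so $\overline Q^{_{_{f}}}=p_B^{-1}(Q)$ lies outside $V(\Ker(p_A))$, and the cited lemma produces the canonical isomorphism $(\da)_{\overline Q^{_{_{f}}}}\cong B_Q$ induced by $p_B$. For part (2), my approach would be to exploit the hypothesis $f^{-1}(J)\not\subseteq P$ to annihilate $J_0$ after localization: choosing $a_0\in f^{-1}(J)\setminus P$, the decomposition $0=f(a_0)+(-f(a_0))$ with $-f(a_0)\in J$ shows $(a_0,0)\in\da\setminus P^{\prime_{_{\! f}}}$, and the identity $(a_0,0)(0,j)=0$ valid for every $j\in J$ forces $J_0$ to vanish in $(\da)_{P^{\prime_{_{\! f}}}}$. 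The retraction $p_A$, whose kernel is $J_0$, therefore induces the desired canonical isomorphism $(\da)_{P^{\prime_{_{\! f}}}}\cong A_P$.

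Part (3) is the main obstacle, since when $P\supseteq f^{-1}(J)$ the annihilation trick of (2) is unavailable and the answer genuinely uses both factors of the pullback. My strategy is to construct the isomorphism explicitly. From Corollary \ref{spec} one reads $T:=\da\setminus P^{\prime_{_{\! f}}}=\{(a,f(a)+j):a\in A\setminus P,\,j\in J\}$, so $p_A(T)=A\setminus P$ and $p_B(T)=S$. I would define $\psi:\da\to A_P\Join^{f_P}J_S$ by $\psi(a,f(a)+j):=(a/1,f_P(a/1)+j/1)$ and verify that $\psi(T)$ consists of units: in any amalgamation, a pair $(\alpha,f(\alpha)+k)$ is invertible iff $\alpha$ is a unit in the first factor and $f(\alpha)+k$ is a unit in the second, and each image $(a'/1,(f(a')+j')/1)$ of an element of $T$ satisfies both conditions (the first since $a'\notin P$, the second since $f(a')+j'\in S$). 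Thus $\psi$ extends canonically to $\Phi:(\da)_{P^{\prime_{_{\! f}}}}\to A_P\Join^{f_P}J_S$, sending $(a,f(a)+j)/(a',f(a')+j')\mapsto(a/a',(f(a)+j)/(f(a')+j'))$. The main technical step is bijectivity: for surjectivity I would represent a general target element $(a/a',f_P(a/a')+j/s)$ with $s=f(\tilde a)+\tilde j\in S$ and, after clearing the common denominator $f(a')(f(\tilde a)+\tilde j)=f(a'\tilde a)+f(a')\tilde j\in S$, read off an explicit preimage in $(\da)_{P^{\prime_{_{\! f}}}}$; injectivity follows from an analogous computation showing that if both projections of a fraction vanish, a suitable element of $T$ can be exhibited that already annihilates a numerator lift inside $\da$. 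The delicate point throughout is managing these explicit denominator manipulations within the amalgamation.
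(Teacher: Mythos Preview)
Your argument is correct. Parts (1) and (2) coincide with the paper's treatment: the paper derives (1) from Lemma \ref{fon} exactly as you do and declares (2) ``straightforward'' without further comment, so your explicit annihilation of $J_0$ via $(a_0,0)$ merely spells out what the authors leave to the reader.

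The genuine divergence is in part (3). The paper does not construct $\Phi$ by hand; instead it observes that $A_P\Join^{f_P}J_S$ is itself the fiber product $\breve{f_P}\times_{B_S/J_S}\pi_{(P)}$, verifies (as you also do) that $p_A(T)=A\setminus P$ and $p_B(T)=S$, and then invokes \cite[Proposition 1.9]{F}, a general statement to the effect that localizing a fiber product at a multiplicative set yields the fiber product of the localizations at the image multiplicative sets. Your approach avoids this external reference entirely, building the map $\psi$, checking the unit criterion in the amalgamation, and then verifying bijectivity of $\Phi$ through explicit denominator clearing. Both are valid: the paper's route is short and structural but pushes the real work into the cited proposition, whereas yours is self-contained and makes the isomorphism completely explicit at the cost of the bookkeeping you flag in the surjectivity and injectivity checks. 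If you carry out those checks carefully (the injectivity step, for instance, is handled by multiplying by $(cd,\,f(c)(f(d)+j''))\in T$ when $ca=0$ and $(f(d)+j'')(f(a)+j)=0$), your argument stands on its own.
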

\noindent \textsc{Proof.} Keeping in mind the fiber product
structure of $\da$, (1) follows from Lemma \ref{fon} and (2) is
straightforward. From the last part of Remark \ref{de}(a) we infer
that, if $\breve{f_P}:A_P\longrightarrow B_S/J_S$ is the ring
homomorphism induced by $f_P$ and if $\pi_{(P)}:B_S\longrightarrow
B_S/J_S$ is the canonical projection, then $A_P\Join^{f_P}J_S$ is
isomorphic to the fiber product
$\breve{f_P}\times_{B_S/J_S}\pi_{(P)}$. Moreover, it is easily
verified that $p_A(\da\setminus P^{{\prime_{_{\! f}}}})=A\setminus
P$ and $p_B(\da\setminus P^{{\prime_{_{\! f}}}})= S$. Then
statement (3) follows from  \cite[Proposition 1.9]{F}.\hfill$\Box$


\section{Extension of ideals of $A$ to $\da$}

In this section we pursue the study of the ideal-theoretic
structure of the amalgamation $\da$.

\begin{prop}\label{extension}
We preserve the notation of Proposition \ref{inizio} and Corollary
\ref{spec}. The following properties hold.
\begin{enumerate}[\rm (1)]
\item If $I$ (respectively, $H$) is an ideal of $A$ (respectively, of $ f(A)+J$)
such that $f(I)J\subseteq H\subseteq J$, then
$
I\!\Join^f\!\!H:= \{(i,f(i)+h)\mid i\in I,\, h\in H\}
$
is an ideal of $\da$.
\item If $I$ is an ideal of $A$, then the extension $I(\da)$ of $I$ to $\da$ coincides with
$
I\!\Join^f\!\!(f(I)B)J:= \{(i,f(i)+\beta)\mid i\in I,\, \beta\in
(f(I)B)J\}\,.$
\item If $I$ is an ideal of $A$ such that $f(I)B=B$, then
$
I(\da)=I^{\prime_f}= \{(i, f(i)+j) \mid   i\in I,\,   j \in J\} =
I\!\Join^f\!\!J\,.
$
\end{enumerate}
\end{prop}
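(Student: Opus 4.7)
The plan is to prove (1) by direct verification that $I\!\Join^f\!\! H$ is closed under addition and absorbs multiplication by elements of $\da$, then use (1) as the engine for (2), and finally deduce (3) as an immediate corollary of (2).

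For (1), closure under addition is inherited from the additive structure of $I$ and $H$, so the real content is absorption. Given $(a,f(a)+j)\in\da$ and $(i,f(i)+h)\in I\!\Join^f\!\! H$, I would expand
$$
(a,f(a)+j)(i,f(i)+h)=(ai,\,f(ai)+f(a)h+f(i)j+jh).
$$
Then $ai\in I$, while the second coordinate lies in $H$: the summands $f(a)h$ and $jh$ lie in $H$ because $H$ is an ideal of $f(A)+J$ and both $f(a)$ and $j$ belong to $f(A)+J$, and $f(i)j\in f(I)J\subseteq H$ by hypothesis.

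For (2), I would first check that $H:=(f(I)B)J$ satisfies the hypotheses of (1): it is contained in $J$ (since $J$ is an ideal of $B$), it contains $f(I)J$ trivially, and it is an ideal of $B$, hence of $f(A)+J$. Thus (1) supplies that $I\!\Join^f\!\!(f(I)B)J$ is an ideal of $\da$, and it contains $\iota(I)=\{(i,f(i)):i\in I\}$ (take $\beta=0$); therefore $I(\da)\subseteq I\!\Join^f\!\!(f(I)B)J$. For the reverse inclusion, I would split $(i,f(i)+\beta)=\iota(i)+(0,\beta)$ with $\iota(i)\in I(\da)$, and, writing $\beta=\sum_k f(i_k)b_k j_k$ with $i_k\in I$, $b_k\in B$, $j_k\in J$, note that $(0,b_k j_k)\in\da$ since $b_k j_k\in J$, and compute
$$
\iota(i_k)\cdot(0,b_k j_k)=(0,f(i_k)b_k j_k),
$$
so that summing yields $(0,\beta)\in I(\da)$.

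Finally, (3) is immediate from (2): if $f(I)B=B$, then $(f(I)B)J=BJ=J$, whence $I(\da)=I\!\Join^f\!\! J=I^{\prime_{_{\!f}}}$. The argument presents no conceptual obstacle; the one subtlety is that in (1) one must use that $H$ is an ideal of the intermediate ring $f(A)+J$ (not merely an $A$-submodule of $J$) to absorb the cross term $jh$, and in (2) that an arbitrary element of $J$ enters $\da$ through the pair $(0,j)$, which is what makes the generating argument for $(0,\beta)$ work.
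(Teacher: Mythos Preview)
Your proof is correct and follows essentially the same route as the paper's own proof: part (1) is declared ``straightforward'' there, and for (2) the paper likewise applies (1) with $H=(f(I)B)J$ to get an ideal containing $\iota(I)$, then shows every element $(i,f(i)+\beta)$ lies in any ideal containing $\iota(I)$ via the identity $(i,f(i)+\beta)=(i,f(i))+\sum_k(\alpha_k,f(\alpha_k))(0,b_k)$ with $\alpha_k\in I$, $b_k\in J$ (note $(f(I)B)J=f(I)J$ since $BJ=J$, so the two-factor form suffices). Part (3) is deduced identically.
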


\noindent \textsc{Proof.} (1) is straightforward. (2). Set
$I_0:=I\!\Join^f\!\!(f(I)B)J$. By applying (1) to $H:=( f(I)B)J$,
it follows that $I_0$ is an ideal of $\da$ and, by definition,
$I_0\supseteq\iota(I) \ (= \{(i,f(i)) \mid i\in I\})$. Now, let
$L$ be an ideal of $\da$ containing $\iota(I)$, and let $(i, f(i)
+ \beta)\in I_0$ (where $i\in I$ and $\beta \in (f(I)B)J$).
Therefore, we can find $\alpha_1, \alpha_2,\z,\alpha_n\in I$,
 $b_1,b_2,\z,b_n\in J$ such that $\beta=\sum_{k=1}^n f(\alpha_k)b_k$.
Since, $(i,f(i)),(\alpha_1,f(\alpha_1)), $ $
(\alpha_2,f(\alpha_2)), \z,$ $(\alpha_n,f(\alpha_n))\in
\iota(I)\subseteq L$, then
$$
 (i,f(i)+ \beta)=(i,f(i))+\sum_{k=1}^n
(\alpha_k,f(\alpha_k))(0,b_k)\in L\,.
$$
and so $I_0\subseteq L$. The proof  of (2) is now complete. (3)
follows immediately from (2). \hfill$\Box$

\begin{cor}\label{radical} Let $A$ be a local ring with
maximal ideal $M$, let $f:A\rightarrow B$ be a ring homomorphism,
and $J$ be an ideal of $B$ such that $f^{-1}(Q)\neq M$, for each
$Q\in \spec(B)\setminus V(J)$. If $I$ is an ideal of $A$ whose
radical is $M$, then the radical of $I(\da)$ is $M^{\prime_f} \ (=
M\!\Join^f\!\!J)$.
\end{cor}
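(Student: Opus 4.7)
The plan is to identify the prime ideals of $\da$ containing $I(\da)$ via Corollary \ref{spec}(3) and to show that $M^{\prime_f}$ is the only one. This will give $\sqrt{I(\da)} = M^{\prime_f}$ since $M^{\prime_f}$ is itself maximal (hence prime) by Corollary \ref{localefarf}(1).

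First I would verify the easy inclusion $\sqrt{I(\da)} \subseteq M^{\prime_f}$. By Proposition \ref{extension}(2), $I(\da) = I\!\Join^f\!\!(f(I)B)J$, and since $I\subseteq M$ and $(f(I)B)J \subseteq J$, this ideal is contained in $M\!\Join^f\!\!J = M^{\prime_f}$. As $M^{\prime_f}$ is prime, the inclusion between radicals follows.

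For the reverse inclusion, I would argue that the only prime of $\da$ containing $I(\da)$ is $M^{\prime_f}$. By Corollary \ref{spec}(3), any prime of $\da$ has one of two shapes. Case (a): if $P^{\prime_f} \supseteq I(\da)$ for some $P \in \spec(A)$, then applying $p_{\!{_A}}$ (which sends $I(\da)$ onto $I$ and $P^{\prime_f}$ onto $P$ by Proposition \ref{inizio}(3)) yields $I\subseteq P$, so $M=\sqrt{I}\subseteq P$ and hence $P=M$, giving $P^{\prime_f}=M^{\prime_f}$. Case (b): if $\overline{Q}^{_{_f}} \supseteq I(\da)$ for some $Q\in \spec(B)\setminus V(J)$, then testing on elements $(i,f(i))\in I(\da)$ (taking $\beta=0$ in the description of Proposition \ref{extension}(2)) forces $f(i)\in Q$ for every $i\in I$, so $I\subseteq f^{-1}(Q)$ and therefore $M=\sqrt{I}\subseteq f^{-1}(Q)$. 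Maximality of $M$ yields $f^{-1}(Q)=M$, contradicting the hypothesis that $f^{-1}(Q)\neq M$ for each $Q\in \spec(B)\setminus V(J)$.

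The combination of the two cases shows $M^{\prime_f}$ is the unique prime of $\da$ containing $I(\da)$, so $\sqrt{I(\da)} = M^{\prime_f}$. The only subtle point, and the one worth checking carefully, is the step in Case (b) where we rule out prime ideals ``coming from $B$'': the crucial input is precisely the hypothesis on the contraction $f^{-1}(Q)$, which was designed for this purpose. Once this is in hand, the rest of the argument is a direct application of Proposition \ref{extension}(2) and Corollary \ref{spec}(3), and no computation beyond the inclusion $I\subseteq f^{-1}(Q)$ is required.
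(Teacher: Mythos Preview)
Your proof is correct and follows essentially the same approach as the paper's: both use Corollary \ref{spec}(3) to split into the two types of primes, dispatch the $P^{\prime_f}$ case via $I\subseteq P$, and in the $\overline{Q}^f$ case derive $I\subseteq f^{-1}(Q)$ to reach a contradiction with the hypothesis. The only cosmetic difference is that the paper first observes $(f(I)B)J\subseteq Q$ and then uses $J\nsubseteq Q$ to get $f(I)\subseteq Q$, whereas you obtain $f(I)\subseteq Q$ directly by testing the elements $(i,f(i))$; your route is slightly more direct but the argument is the same.
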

\noindent \textsc{Proof.}  Suppose that $P$ is a prime ideal of
$A$ such that $P^{\prime_f}\supseteq I(\da)$. It follows
immediately that $I\subseteq P$ and thus $P=M$, by assumption.
Suppose now that $I(\da)\subseteq \overline{Q}^f$, for some $Q\in
\spec(B)\setminus V(J)$. From Proposition \ref{extension}(2) and
the definition of $\overline{Q}^f$, we deduce that
$(f(I)B)J\subseteq Q$ and, in particular, $f(I)\subseteq Q$, i.e.,
$I\subseteq f^{-1}(Q)$; therefore, by assumption, $f^{-1}(Q)= M$,
which is a contradiction. This means that the unique prime ideal
of $\da$ containing $I(\da)$ is $M^{\prime_f}$. \hfill$\Box$

\begin{oss}
Notice that, in case $J$ is finitely generated as $A$-module
and it is contained in the Jacobson radical of $B$, for every
prime $Q$ of $B$ not containing $J$, we have $f^{-1}(Q)\neq M$. In
fact, if we had $f^{-1}(Q)= M$, we would have $f(M) \subseteq Q$,
that implies $J/QJ$ is finite dimensional as $A/M$-vector space;
now, $J \not\subset Q$ and $Q$ is a prime ideal, so if $j \in J
\setminus Q$ then $j^n \in J\setminus Q$, for every integer $n\geq
1$ and, since $J \subseteq {\rm Jac}(B)$, it is not difficult to
check that the images of the elements $j, ,j^2, \dots, j^n$ in
$J/QJ$ are linearly independent over $A/M$ for any $n$, that is a
contradiction.

In particular, if $J$ is finitely generated as $A$-module and it
is contained in the Jacobson radical of $B$, the extension in
$\da$ of any $M$-primary ideal of $A$ is $M
\!\!\Join^f\!\!\!J$-primary.
\end{oss}

\section{The embedding dimension of $\da$}
Let $A$ be a ring and $I$ be an ideal of $A$. If $I$ is finitely
generated,  we denote,  as usual, by $\nu(I)$ the minimum number
of generators   of  the ideal  $I$. Assume that $A$ is  a local
ring and that $M$ is its maximal ideal. Set $\boldsymbol{k}:=
A/M$.  If we suppose  that $M$ is finitely generated, we call the
{\it embedding dimension of} $A$ the natural number
$$
{\rm embdim}(A):= \nu(M)=\dim_{\boldsymbol{k}}(M/M^2)\,.
$$
We give next some bounds for the embedding dimension of $\da$,
when this ring is local with finitely generated maximal ideal.

\begin{prop}\label{embdimineq}
We preserve the notation of Proposition \ref{inizio}. Assume that
$A$ is a local ring with maximal ideal $M$ and that the ideal $J$
is contained in  the Jacobson radical $\Jac(B)$. The following statements hold.
\begin{enumerate}[\rm (1)]
\item  If $\da$ has finitely generated maximal ideal, then $A$ has also finitely generated maximal ideal
and
$$
{\rm embdim}(A)\leq {\rm embdim}(\da).
$$
\item{\hskip -5pt \footnote{  \, see the Appendix}}  If $A$ has  finitely generated maximal ideal and $J$ is
finitely generated, then $\da$ has finitely generated maximal
ideal and
$$
 {\rm embdim}(\da)\leq {\rm embdim}(A) +\nu(J).
$$
\end{enumerate}
\end{prop}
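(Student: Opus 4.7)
The plan is to use the ring retraction $p_{\!{_A}}:\da\rightarrow A$ of Remark \ref{de}(b) for part (1), and to exhibit an explicit generating set of $M^{\prime_f}$ for part (2). Under the standing hypotheses, Corollary \ref{localefarf}(3) gives that $\da$ is local with maximal ideal $M^{\prime_f}=M\!\!\Join^f\!\!\!J$, while Proposition \ref{inizio}(3) says $p_{\!{_A}}(M^{\prime_f})=M$.

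For (1), suppose $\xi_1,\dots,\xi_n$ generate $M^{\prime_f}$ as an ideal of $\da$. Applying the ring homomorphism $p_{\!{_A}}$ to any expansion $\xi=\sum_i \eta_i\xi_i$ in $\da$ yields $p_{\!{_A}}(\xi)=\sum_i p_{\!{_A}}(\eta_i)\, p_{\!{_A}}(\xi_i)$ in $A$, so $p_{\!{_A}}(\xi_1),\dots,p_{\!{_A}}(\xi_n)$ generate $p_{\!{_A}}(M^{\prime_f})=M$ as an ideal of $A$. This forces $M$ to be finitely generated and $\nu(M)\leq \nu(M^{\prime_f})$, as required.

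For (2), fix generators $m_1,\dots,m_r$ of $M$ and generators $j_1,\dots,j_s$ of $J$, taken so that every $j\in J$ admits a decomposition $j=\sum_k (f(a_k)+y_k)\, j_k$ with $a_k\in A$ and $y_k\in J$; this is the appropriate reading of $\nu(J)=s$ in the present context, since under our hypotheses a short check shows that $f(A)+J$ is local with maximal ideal $f(M)+J$. I claim the $r+s$ elements
\[
\iota(m_1),\dots,\iota(m_r),\ (0,j_1),\dots,(0,j_s)\in M^{\prime_f}
\]
generate $M^{\prime_f}$ as a $\da$-ideal. Given $(m,f(m)+j)\in M^{\prime_f}$, split it as $\iota(m)+(0,j)$; using that $\iota$ is a ring homomorphism, write $\iota(m)=\sum_i \iota(a_i)\iota(m_i)$ where $m=\sum_i a_i m_i$, and, using the displayed decomposition of $j$ together with the identity $(a,f(a)+y)(0,j_k)=(0,(f(a)+y)j_k)$, write $(0,j)=\sum_k (a_k,f(a_k)+y_k)\cdot(0,j_k)$. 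Since each factor $(a_k,f(a_k)+y_k)$ lies in $\da$, this gives the desired $\da$-combination.

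The delicate point is pinning down the right module structure on $J$ against which $\nu(J)$ must be measured for the bound to hold. A structural derivation that avoids any explicit choice of generators comes from the short exact sequence $0\to J_0\to M^{\prime_f}\to M\to 0$ of $\da$-modules (with $J_0:=\{0\}\times J=\Ker(p_{\!{_A}})$): tensoring with $\da/M^{\prime_f}\cong A/M$ yields the four-term exact sequence
\[
J_0/M^{\prime_f}J_0\longrightarrow M^{\prime_f}/(M^{\prime_f})^2\longrightarrow M/M^2\longrightarrow 0,
\]
and since $J_0/M^{\prime_f}J_0\cong J/(f(M)J+J^2)$, a direct dimension count produces the inequality of the statement (in fact with equality once one identifies $\nu(J)$ with $\dim_{A/M}(J/(f(M)J+J^2))$).
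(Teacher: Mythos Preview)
Your argument for (1) is exactly the paper's: push a generating set of $M^{\prime_f}$ down through the surjection $p_{\!{_A}}$. For (2) the paper writes down the very same candidate generating set $\{\iota(m_\lambda),\,(0,j_\mu)\}$ and simply asserts that it generates $M^{\prime_f}$, concluding the inequality at once. So at the level of strategy the two proofs coincide.

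Where you go further is in checking that the $(0,j_\mu)$ really span $\{0\}\times J$ over $\da$. You correctly observe that the $\da$-span of the $(0,j_\mu)$ is $\{0\}\times\sum_\mu(f(A)+J)\,j_\mu$, so one needs the $j_\mu$ to generate $J$ as an $(f(A)+J)$-module, not merely as a $B$-ideal; the paper's ``it follows immediately'' passes over this point. Your reading of $\nu(J)$ as $\nu_{f(A)+J}(J)$ is precisely what makes the displayed generators work, and your remark that $f(A)+J$ is local with maximal ideal $f(M)+J$ (so that Nakayama identifies $\nu_{f(A)+J}(J)$ with $\dim_{\boldsymbol{k}}\!\bigl(J/(f(M)J+J^2)\bigr)$) is a genuine addition not present in the paper. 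The short exact sequence argument in your final paragraph is also absent from the paper; it gives a clean structural alternative and in fact yields the exact formula
\[
{\rm embdim}(\da)\;=\;{\rm embdim}(A)+\dim_{\boldsymbol{k}}\!\bigl(J/(f(M)J+J^2)\bigr),
\]
since one checks directly that $J_0\cap(M^{\prime_f})^2=M^{\prime_f}J_0$, so the leftmost map in your tensored sequence is injective.
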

\noindent \textsc{Proof.} By using Corollary \ref{localefarf}(3),
it follows that $\da$ is a  local ring with maximal ideal $
M^{\prime_f}:=M\!\Join^f\!\!J:=\{(m, f(m)+j)\mid
m\in M,\  j\in J\}. $\\
(1) It suffices to note that, if $\{{\bf x}_1, {\bf x}_2,\z,{\bf
x}_n\}$ is a finite set of generators of $M^{\prime_f}$, then
$\{p_A({\bf
x}_i)\mid  i=1, 2,\z,n\}$ is a finite set of generators of $M$.\\
(2) Let \ $m_1, m_2, \z,m_r\in M$ \ and \ $j_1, j_2, \z,j_s\in J$
\ be elements such that \linebreak
 $M =(m_1, m_2,
\z,m_r) $ and $ J=(j_1, j_2, \z,j_s)$,  with $\nu(M)=r$ and
$\nu(J)=s$. It follows immediately that $ \{(m_\lambda,
f(m_\lambda)); \,  (0, j_\mu) \mid 1\leq \lambda \leq r, \, 1\leq
\mu \leq s\} $ is a set of generators of $M\!\Join^f\!J$.
Therefore,  ${\rm embdim}(\da)\leq{\rm embdim}(A)+\nu(J)$.
\hfill$\Box$

\medskip

In the next example we will provide a ring homomorphism
$f:A\rightarrow B$ and an ideal $J\neq (0)$ of $B$ such that ${\rm
embdim}(A)={\rm embdim}(\da)<{\rm embdim}(A)+\nu(J)$.

\begin{ex}
Let $p$ be a prime number, $T$ be an indeterminate over $\mathbb
Q$, and set $A:=\mathbb Z_{(p)}, B:=\mathbb Q[\![T]\!]$, $ J:=TB$.
By \cite[Example 2.6]{dafifoproc}, the ring $S:=A+TB$ is naturally
isomorphic to $\da$, where $\iota:A\rightarrow B$ is the
inclusion. It is easy to see that $S$ is a 2-dimensional valuation
domain whose maximal ideal $N \ (:= p\mathbb Z_{(p)} +TB) $ is
principal (namely, $N =pS$). It follows that ${\rm embdim}(A)={\rm
embdim}(\da)=1<{\rm embdim}(A)+\nu(J)=2$.
\end{ex}

The previous example is a particular case of the following result.

\begin{prop}\label{edimlower}
We preserve the notation of Proposition \ref{inizio} and Corollary
\ref{spec}. Assume that $A$ is a local ring with finitely
generated maximal ideal $M$ satisfying the property $f(M)B=B$.
Then, for every ideal $J$ of $B$ contained in the Jacobson radical
of $B$, the amalgamation $\da$ is a local ring with finitely
generated maximal ideal, and
$$
{\rm embdim}(A)={\rm embdim}(\da)\,.
$$
\end{prop}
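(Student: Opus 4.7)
The strategy is to show that, under the hypothesis $f(M)B = B$, the maximal ideal $M^{\prime_{\!f}}$ of $\da$ is already the extension of $M$, so that any generating set of $M$ lifts to a generating set of $M^{\prime_{\!f}}$ of the same cardinality. Then equality of embedding dimensions follows by combining this with the general lower bound of Proposition \ref{embdimineq}(1).

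\medskip

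\noindent More precisely, I would proceed as follows. First, since $A$ is local with maximal ideal $M$ and $J \subseteq \Jac(B)$, Corollary \ref{localefarf}(3) guarantees that $\da$ is local with unique maximal ideal $M^{\prime_{\!f}} = M\!\Join^f\!\!J$. Next, applying Proposition \ref{extension}(3) to $I := M$, the assumption $f(M)B = B$ gives
\[
M(\da) \;=\; M\!\Join^f\!\!J \;=\; M^{\prime_{\!f}}.
\]
Thus the extension of $M$ to $\da$ already coincides with the maximal ideal of $\da$.

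\medskip

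\noindent From here the conclusion is immediate. If $\{m_1, \ldots, m_r\}$ is a minimal system of generators of $M$ (so $r = \mathrm{embdim}(A)$), then $\{\iota(m_1), \ldots, \iota(m_r)\}$ generates $M(\da) = M^{\prime_{\!f}}$ as an ideal of $\da$. This simultaneously shows that $M^{\prime_{\!f}}$ is finitely generated and that
\[
\mathrm{embdim}(\da) \;\leq\; r \;=\; \mathrm{embdim}(A).
\]
The reverse inequality $\mathrm{embdim}(A) \leq \mathrm{embdim}(\da)$ is precisely Proposition \ref{embdimineq}(1), which applies once $M^{\prime_{\!f}}$ is known to be finitely generated.

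\medskip

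\noindent There is no real obstacle here; the proof is essentially a direct application of the two earlier results. The only conceptual point worth emphasizing is the role of the hypothesis $f(M)B = B$: it forces the ``$J$--component'' in $M^{\prime_{\!f}}$ to be automatically captured by the extension of $M$ via the identities $J = (f(M)B)J \subseteq M(\da)$, so no extra generators from $J$ are needed — which is exactly why the upper bound of Proposition \ref{embdimineq}(2) can be sharpened to the lower bound in this setting.
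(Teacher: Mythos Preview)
Your proof is correct and follows essentially the same approach as the paper: both invoke Corollary \ref{localefarf}(3) to get locality, Proposition \ref{extension}(3) to identify $M^{\prime_f}$ with $M(\da)$, and then combine the resulting upper bound with Proposition \ref{embdimineq}(1). The paper's argument is virtually identical to yours, down to the choice of auxiliary results.
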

\noindent \textsc{Proof.} Let $r:={\rm embdim}(A)$ and let $\{m_1,
m_2, \z,m_r\}$ be a minimal set of generators for $M$. By
Corollary \ref{localefarf}(3), $\da$ is a local ring with maximal
ideal $M^{\prime_f}:=\{(m,f(m)+j)\mid M\in M, j\in J\}$ and,
applying Proposition \ref{extension}(3), we get the equality
$M^{\prime_f}=M(\da)$. It follows immediately that
$\{(m_1,f(m_1)),(m_2,f(m_2)),\z,(m_r,f(m_r))\}$ is a finite set of
generators for $M^{\prime_f}$ and, thus, ${\rm embdim}(\da)\leq
r:={\rm embdim}(A)$. Now, the conclusion is an immediate
consequence of Proposition \ref{embdimineq}(1).
\hfill$\Box$\\

The next result will provide a relevant class of rings obtained by
amalgamation satisfying the equality ${\rm embdim}(\da)={\rm
embdim}(A)+\nu(J)$.

\begin{thm}{\hskip -6pt \footnote{ { \, see the Appendix}}} \label{embdimeq}
We preserve the notation of Proposition \ref{inizio}. Suppose that
$A$ is a local ring with finitely generated maximal ideal $M$, and
that $J$ is a finitely generated ideal of $B$. If $f(M)B \subseteq
\Jac(B)$ and $J\subseteq \Jac(B)$, then $\da$ is a local ring with
finitely generated maximal ideal, and
$$
{\rm embdim}(\da)={\rm embdim}(A)+\nu(J)\,.
$$
\end{thm}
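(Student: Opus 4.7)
The plan is to compute ${\rm embdim}(\da)=\dim_k M^{\prime_f}/(M^{\prime_f})^2$ (with $k:=A/M$) through a direct-sum decomposition, and then identify the ``new'' summand with $\nu(J)$ by a Nakayama argument using both Jacobson-radical hypotheses. By Corollary \ref{localefarf}(3), $\da$ is local with maximal ideal $M^{\prime_f}:=M\!\Join^f\!\!J$, and the surjection $p_{\!{_A}}$ in Proposition \ref{inizio}(3) identifies $\da/M^{\prime_f}$ with $A/M=k$.

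Exploiting the ring retraction $p_{\!{_A}}\circ\iota=\ude_{\!{_A}}$ from Remark \ref{de}(b), one has an $A$-module direct sum $\da=\iota(A)\oplus(\{0\}\times J)$, where $A$ acts on $J$ through $f$. Under the identification $(a,f(a)+j)\leftrightarrow(a,j)$, this restricts to $M^{\prime_f}\cong M\oplus J$. Expanding a product
$$
(m_1,f(m_1)+j_1)(m_2,f(m_2)+j_2)=\bigl(m_1m_2,\,f(m_1m_2)+f(m_1)j_2+f(m_2)j_1+j_1j_2\bigr)
$$
and checking both inclusions gives $(M^{\prime_f})^2\cong M^2\oplus\bigl(f(M)J+J^2\bigr)$, so
$$
M^{\prime_f}/(M^{\prime_f})^2\;\cong\;M/M^2\;\oplus\;J/(f(M)J+J^2)
$$
as $k$-vector spaces (both summands are annihilated by $M$, through $\iota$ and through $f$, respectively). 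Consequently, ${\rm embdim}(\da)={\rm embdim}(A)+\dim_k J/(f(M)J+J^2)$.

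It therefore remains to show $\dim_k J/(f(M)J+J^2)=\nu(J)$. The inequality ``$\leq$'' is free: Proposition \ref{embdimineq}(2) yields ${\rm embdim}(\da)\leq{\rm embdim}(A)+\nu(J)$, which, combined with the previous display, forces $\dim_k J/(f(M)J+J^2)\leq\nu(J)$. For the reverse, set $s:=\dim_k J/(f(M)J+J^2)$ and pick $j_1,\dots,j_s\in J$ whose classes form a $k$-basis. Lifting coefficients from $k$ to $A$ and using that $k$ acts on the quotient via $f$, one finds for every $j\in J$ elements $a_i\in A$ with $j-\sum_i f(a_i)j_i\in f(M)J+J^2$, whence
$$
J\;\subseteq\;Bj_1+\cdots+Bj_s+\bigl(f(M)B+J\bigr)J.
$$
Since $f(M)B\subseteq\Jac(B)$ and $J\subseteq\Jac(B)$, this gives $J=Bj_1+\cdots+Bj_s+\Jac(B)\cdot J$; Nakayama's Lemma applied to the finitely generated $B$-module $J/(Bj_1+\cdots+Bj_s)$ then yields $J=Bj_1+\cdots+Bj_s$, so $\nu(J)\leq s$, as desired.

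The main obstacle is precisely this last Nakayama step. The delicate point is that a $k$-spanning set of $J/(f(M)J+J^2)$ only produces $f(A)$-linear combinations in $J$, whereas $\nu(J)$ counts $B$-generators; it is exactly the combined assumption $f(M)B+J\subseteq\Jac(B)$ that allows one to absorb the discrepancy between $f(A)+J$ and $B$ into $\Jac(B)\cdot J$ and then invoke Nakayama. Combining the two inequalities yields ${\rm embdim}(\da)={\rm embdim}(A)+\nu(J)$.
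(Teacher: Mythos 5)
Your proof is correct, and at bottom it runs on the same mechanism as the paper's, but it is organized differently enough to be worth comparing. The paper fixes minimal generating sets of $M$ and $J$, takes the generating set $G'$ of $M^{\prime_f}$ supplied by Proposition \ref{embdimineq}(2), and checks directly that its image in $M^{\prime_f}/(M^{\prime_f})^2$ is linearly independent: a vanishing relation forces first $a_\lambda\in M$ (from the $A$-component), then $\sum_\mu f(\alpha_\mu)j_\mu\in (f(M)B)J+J^2\subseteq \Jac(B)J$, and if some $\alpha_\mu$ were a unit one multiplies by $f(\alpha_\mu)^{-1}$, writes the result as $\sum_\mu l_\mu j_\mu$ with $l_\mu\in\Jac(B)$, and concludes $(1-l_1)j_1\in (j_2,\ldots,j_s)B$ with $1-l_1$ a unit, contradicting minimality of the generating set of $J$. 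You instead isolate the two components of that computation as an explicit $k$-linear splitting $M^{\prime_f}/(M^{\prime_f})^2\cong M/M^2\oplus J/(f(M)J+J^2)$ --- which is exactly the component-by-component analysis the paper performs implicitly --- and then prove the standalone identity $\dim_k J/(f(M)J+J^2)=\nu(J)$, running Nakayama in the opposite direction (a $k$-basis of the quotient lifts to a $B$-generating set of $J$) rather than the paper's direction (a minimal $B$-generating set remains independent in the quotient); the paper's ``$1-l_1$ is a unit'' step is the same Nakayama trick in disguise, and both arguments use $f(M)B+J\subseteq\Jac(B)$ at precisely the same point. What your packaging buys is a reusable formula ${\rm embdim}(\da)={\rm embdim}(A)+\dim_k J/(f(M)J+J^2)$ valid before any hypothesis on $f(M)B$ is imposed; what it costs is nothing, since like the paper you import the upper bound (equivalently, the fact that $G'$ generates $M^{\prime_f}$) wholesale from Proposition \ref{embdimineq}(2) --- your identity $\dim_k J/(f(M)J+J^2)=\nu(J)$ would in fact let you rederive that bound directly, which would make the argument self-contained. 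I see no gap beyond this shared reliance.
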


\noindent \textsc{Proof.} Let $\{m_1, m_2, \z,m_r\}\subseteq M$,
and  $\{j_1, j_2, \z,j_s\}\subseteq J$ be sets of generators of
$M$ and $J$, respectively, such that $\nu(M)=r$ and $\nu(J)=s$. By
Proposition \ref{embdimineq} and its proof it follows immediately
the inequality ${\rm embdim}(\da)\leq{\rm embdim}(A)+\nu(J)$ and,
more precisely, that $G':=\{(m_\lambda,f(m_\lambda)); \,
(0,j_\mu)\mid 1\leq \lambda\leq r,1\leq\mu\leq s\}$ is a set of
generators of the maximal ideal $M' :=M^{\prime_f} =
M\!\Join^f\!\!J$ of $\da$. Notice that $\boldsymbol{k}$, the
residue field of $A$, coincide with the residue field of $\da$
(see Proposition \ref{inizio}(2)). Then, to get the equality ${\rm
embdim}(\da)={\rm embdim}(A)+\nu(J)$ it suffices to show that the
image $\overline{G'}$ of $G'$ in $M'/M'^2$ is a basis of $M'/M'^2$
as a $\boldsymbol{k}-$vector space. Obviously, it is enough to
check that $\overline{G'}$ is linearly independent. Pick $a_1,
a_2, \z,a_r,\alpha_1, \alpha_2, \z,\alpha_s\in A$ such that
$$
\sum_{\lambda=1}^r[a_\lambda]_{M}[(m_\lambda,f(m_\lambda))]_{M'^2}+\sum_
{\mu=1}^s[\alpha_\mu]_{M}[(0,j_\mu)]_{M'^2}=0 \,.  \quad (\star)
$$
In other words, we have
$$
\left(\sum_{\lambda=1}^ra_\lambda m_\lambda,\; \sum_{\lambda=1}^r
f(a_\lambda m_\lambda)+\sum_{\mu=1}^s f(\alpha_\mu)j_\mu\right)\in
M'^2
$$
and, in particular, $\sum_{\lambda=1}^ra_\lambda m_\lambda\in
M^2$. Since $r=\nu(M)$, it is easy to see that $a_\lambda\in M$,
for every $\lambda=1,2, \z,r$.
 Thus, by $(\star)$, we have $\sum_
{\mu=1}^s[\alpha_\mu]_{M}[(0,j_\mu)]_{M'^2}$ $=0 $ and so
$$
\left(0,\; \sum_{\mu=1}^s f(\alpha_\mu)j_\mu\right)\in M'^2 \,.
$$
This means that $\left(0,\; \sum_{\mu=1}^s
f(\alpha_\mu)j_\mu\right)$ is a finite sum of elements of the form
$(m,\, f(m)+j)(n,\, f(n)+\ell)$, where $m,n \in M$ and $ j, \ell
\in J$. Then, an easy computation shows that  \ $\sum_{\mu=1}^s
f(\alpha_\mu)j_\mu$ \ is a finite sum of elements of the form \
$f(m)\ell+f(n)j+j\ell$ \ and thus the element  \
$b:=\sum_{\mu=1}^s f(\alpha_\mu)j_\mu\in  (f(M) B)J+J^2\subseteq
\Jac(B)J$. Suppose, by contradiction, that some coefficient
$\alpha_\mu\in A\setminus
 M$, say $\alpha_1$, and let $\beta_1$ denote the
inverse of $f(\alpha_1)$ in $B$.  Then $\beta_1 b \in \Jac(B)J$,
and thus there are elements $l_1, l_2, \z,l_s\in \Jac(B)$ such
that
$$ \beta_1 b =j_1+\sum_{\mu=2}^s \beta_1 f(\alpha_\mu)j_\mu=\sum_
{\mu=1}^sl_\mu j_\mu.$$
 This shows that $(1-l_1)j_1\in
(j_2,\z,j_s)B$, and hence, keeping in mind that $l_1\in \Jac(B)$,
we have $j_1\in (j_2,\z,j_s)B$, a contradiction. Thus $\alpha_\mu
\in M$ for $\mu=1, 2, \z,s$. The proof is now complete.
\hfill$\Box$

\medskip

As an application we obtain the following.

\begin{cor}
Let $A$ be a local ring with finitely generated maximal ideal, and
let $I$ be a finitely generated proper ideal of $A$. Then, the
duplicated amalgamation $A\!\Join\! I$ of $A$ along $I$ is a local
ring with finitely generated maximal ideal, and furthermore
$
{\rm embdim}(A\!\Join\! I)={\rm embdim}(A)+\nu(I)
$.
\end{cor}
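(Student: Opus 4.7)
The plan is to recognize the duplicated amalgamation $A\!\Join\! I$ as the special case of the general amalgamation construction $\da$ in which $B := A$, the homomorphism $f := \ude_A$ is the identity, and $J := I$. With this identification, everything we need has already been set up in Theorem \ref{embdimeq}, so the task reduces to verifying its two hypotheses and reading off the conclusion.

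First I would check that $A\!\Join\! I$ is local with finitely generated maximal ideal and that the embedding dimension formula applies. Since $A$ is local with maximal ideal $M$ and $I$ is a proper ideal of $A$, we have $I \subseteq M = \Jac(A)$; in the notation of Theorem \ref{embdimeq} this gives both $J = I \subseteq \Jac(B)$ and $f(M)B = \ude_A(M)\cdot A = M \subseteq \Jac(A) = \Jac(B)$. The remaining hypotheses, namely that $M$ is finitely generated and $J = I$ is finitely generated, are precisely what is assumed in the statement of the corollary.

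Then Theorem \ref{embdimeq} applies verbatim and yields simultaneously that $A\!\Join\! I$ is a local ring with finitely generated maximal ideal and that
$$
{\rm embdim}(A\!\Join\! I) = {\rm embdim}(A) + \nu(I)\,,
$$
which is exactly the desired equality. There is no serious obstacle here: the entire content of the corollary is absorbed in checking that the hypotheses of Theorem \ref{embdimeq} specialize correctly, and this verification is immediate from the locality of $A$ and the properness of $I$.
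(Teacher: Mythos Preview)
Your proof is correct and follows essentially the same approach as the paper: the paper's proof simply cites \cite[Example 2.4]{dafifoproc} (which identifies $A\!\Join\! I$ as the amalgamation with $B=A$, $f=\ude_A$, $J=I$) and then applies Theorem~\ref{embdimeq}, exactly as you do after explicitly verifying the hypotheses $I\subseteq M=\Jac(A)$ and $f(M)B=M\subseteq\Jac(A)$.
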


\noindent
\textsc{Proof.} Apply \cite[Example 2.4]{dafifoproc} and Proposition \ref{embdimeq}.\hfill$\Box$\\\\
\section{Cohen--Macaulay and Gorenstein properties for the ring $\da$}

In this section, assuming that $\da$ is local and Noetherian, we
investigate the problem of when $\da$ is a Cohen-Macaulay (briefly CM) ring or
a Gorenstein ring. Moreover, when $\da$ is Cohen-Macaulay, we
determine its multiplicity.

\textsc{Notation and assumptions.}

In the following (unless explicitly stated to the contrary), we
assume that:
\begin{itemize}
	\item $f:A\rightarrow B$ is a ring homomorphism;
	\item $A$ is
	Noetherian, local, with maximal ideal $M$;
	\item $J$ is an ideal of
	$B$ contained in the Jacobson radical ${\rm Jac}(B)$ of $B$;
	\item $J$ is finitely generated as an $A$-module.
\end{itemize}

 In this situation
(by \cite[Proposition 5.7]{dafifoproc} and by Corollary
\ref{localefarf}(3)) we know that the amalgamated algebra $\da$ is
a Noetherian local ring, with maximal ideal $M^{\prime_f} $.
Moreover, the canonical map $\iota:A\rightarrow \da$ is a finite
ring embedding, since $J$ is finitely generated as an $A$-module
\cite[Proposition 5.7]{dafifoproc}, and thus $\dim(A)=\dim(\da)$.
Moreover Ann$(\da)=(0)$, hence the dimension of $\da$ as
$A$--module (or, equivalently, dim$(A/{\rm Ann}(\da))$, since
$\da$ is a finite $A$--module) equals the Krull dimension of
$\da$.

\begin{oss}\label{regularsequence}
We observe that, under the previous assumptions, $\da$ is a CM
ring if and only if it is a CM $A$--module if and only if $J$ is a
maximal CM $A$-module.

As a matter of fact, since the embedding $\iota:A\hookrightarrow
\da$  is finite, by \cite[Exercise 1.2.26(b)]{b-h} we have ${\rm
depth}_A(\da)={\rm depth}(\da)$, and thus, by the discussion
above,
 $\da$ is a CM ring if
and only if $\da$ is a CM $A$--module. Since $\da$ is isomorphic
as an $A$--module to $A \oplus J$, it follows that
$$
{\rm
depth}_A(\da)={\rm depth}(A \oplus J)={\rm min}\{{\rm depth}(J),
{\rm depth}(A)\}={\rm depth}(J)
$$
and, therefore, $\da$ is a CM
$A$--module if and only if $J$ is a CM $A$--module of dimension equal to $\dim(A)$ 
(that is, if and only if $J$ is a maximal CM $A$-module).
\end{oss}

\begin{oss}
If $J$ is not finitely generated as $A$--module, it is more
problematic to find conditions implying $\da$ CM.  One can get
more information if the embedding $\iota:A\rightarrow \da$ is flat
(or, equivalently, if the $A$--module $J$ is flat). In this case,
$\da$ is CM if and only if both $A$ and $\da/M(\da)$ are CM
\cite[Theorem 2.1.7]{b-h}. As an example, set $A:=k[[X]]$,
$B:=k[[X,Y]]$ (where $k$ is a field), and let $J:= M:= (X,Y)$   be
the maximal ideal of $B$. Let $f: A \hookrightarrow B$ be the
inclusion. Clearly,  $J=\prod_{n\geq 1} f(A)Y^n$ is flat as an
$A$-module. Moreover, both $\da$, which is isomorphic to
$k[[X,Y,Z]]/(Y,Z)\cap(X-Y)$, and $\da/M(\da)$, which is isomorphic
to $k[[Y,Z]]/(Y^2,YZ)$, are not CM.
\end{oss}

In order to study when $\da$ is a Gorenstein ring, we need to look
at $A$ endowed with a natural structure of an $\da$--module.

The next  proposition  holds in general, without assuming the
additional hypotheses on $A$, stated at the beginning of the
section.

\begin{prop} \label{hom}
Preserve the notation of Proposition \ref{inizio}, and consider
the natural  map $\Lambda:f^{-1}(J)\rightarrow
\Hom_{A\Join^f\!J}(A,\da)$, where
$\Lambda(x):=\lambda_x:A\rightarrow\da$ is the $\da$--linear map
defined by $\lambda_x(a):=(ax,0)$, for each $a\in A$ and $x \in
f^{-1}(J)$. Then, $\Lambda$ is an $A$--linear embedding and
$\Lambda$ is surjective if and only if ${\rm
Ann}_{f(A)+J}(J)=(0)$.
\end{prop}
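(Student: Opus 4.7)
The plan is to exploit the natural $\da$-module structure on $A$ given by the ring surjection $p_{\!{_A}}: \da \twoheadrightarrow A$ of Proposition~\ref{inizio}(3), under which $(a, f(a)+j)\cdot a' = aa'$. With this setup, well-definedness of $\lambda_x$ is immediate: since $f(ax) = f(a)f(x) \in J$ for every $x \in f^{-1}(J)$, the element $(ax, 0)$ lies in $\da$. The $\da$-linearity of $\lambda_x$ reduces to the identity $(a, f(a)+j)(a'x, 0) = (aa'x, 0)$, which holds regardless of $j$ because of the $0$ in the second coordinate; the $A$-linearity of $\Lambda$ (with $\Hom_{A\Join^f\!J}(A,\da)$ regarded as an $A$-module via $\iota$) is a direct check, and injectivity follows at once from $\lambda_x(1) = (x, 0)$.

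The core of the argument is the characterization of surjectivity. Given $\varphi \in \Hom_{A\Join^f\!J}(A, \da)$, I would write $\varphi(1) = (x, f(x) + j_0)$ with $x \in A$ and $j_0 \in J$, noting that $\varphi$ is determined by $\varphi(1)$ since $A$ is cyclic as an $\da$-module. The key observation is to test $\da$-linearity against elements of $\Ker(p_{\!{_A}}) = \{0\}\times J$, which act as zero on $A$: for every $j \in J$ the identity $\varphi((0,j)\cdot 1) = (0,j)\cdot \varphi(1)$ yields $(0,\, j(f(x)+j_0)) = 0$, that is, $f(x)+j_0 \in {\rm Ann}_{f(A)+J}(J)$. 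If this annihilator vanishes, then $f(x) = -j_0 \in J$ forces $x \in f^{-1}(J)$ and $\varphi(1) = (x, 0) = \lambda_x(1)$, so $\varphi = \lambda_x$ by cyclicity.

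For the converse, I would start from any $y \in {\rm Ann}_{f(A)+J}(J)$, decompose $y = f(x) + j_0$ with $x \in A$ and $j_0 \in J$ (so that $(x, y) \in \da$), and attempt to define $\varphi: A \to \da$ by $\varphi(a) := (a, f(a))\cdot (x, y) = (ax, f(a)y)$. Independence of this formula from the chosen lift in $\da$, and hence $\da$-linearity of $\varphi$, rests precisely on $jy = 0$ for all $j \in J$, which is granted by the choice of $y$; surjectivity of $\Lambda$ then yields $z \in f^{-1}(J)$ with $(x, y) = \varphi(1) = \lambda_z(1) = (z, 0)$, forcing $y = 0$. The main subtlety throughout is keeping track of the two distinct $\da$-actions in play (on $A$ via $p_{\!{_A}}$, and on $\da$ itself by internal multiplication), so that the obstruction to surjectivity surfaces cleanly as the annihilator of $J$ inside $f(A)+J$.
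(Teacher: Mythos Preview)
Your proof is correct and follows essentially the same approach as the paper's. The only cosmetic difference is that you test $\da$-linearity against the elements $(0,j)\in\Ker(p_{\!{_A}})$, while the paper tests against the units $(1,1+j)$; since $(1,1+j)=(1,1)+(0,j)$ and $(1,1)$ acts as the identity, both computations yield the same annihilator condition $j(f(x)+j_0)=0$, and the converse argument via the auxiliary map $\varphi(a)=(a,f(a))(x,y)$ is identical to the paper's.
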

\begin{proof}
The fact that $\Lambda$ is an $A-$linear embedding is
straightforward.
 Assume ${\rm Ann}_{f(A)+J}(J)=(0)$.  Fix now a $\da-$linear
map $g:A\rightarrow \da$   and  the elements $a_0 \in A$ and $j_0
\in J$ such that $(a_0,f(a_0)+j_0)=g(1)$.  For each $j \in J$, by
definition, $(1,1+j)\cdot 1=1$, hence $ g(1)=g((1,1+j)\cdot
1)=(1,1+j)g(1)=(a_0,f(a_0)+j_0+j(f(a_0)+j_0))\,, $ and thus
$j(f(a_0)+j_0)=0$. This proves that $f(a_0)+j_0\in {\rm
Ann}_{f(A)+J}(J)$ and so, by hypothesis, $f(a_0)+j_0=0$.   In
particular,  $a_0\in f^{-1}(J)$ and
 $\Lambda(a_0)=\lambda_{a_0}=g$.
Conversely, assume that  $\Lambda$ is surjective, take an element
$f(a_0)+j_0\in {\rm Ann}_{f(A)+J}(J)$, with $a_0 \in A$ and $j_0
\in J$, and consider the map $\varphi: A \rightarrow \da$ defined
by  $\varphi(a):=(a,f(a))(a_0,f(a_0)+j_0)$,  for each $a \in A$.
Of course, $\varphi$ is a homomorphism of (additive) abelian
groups. Take now  two elements $x\in A$ and
$(\alpha,f(\alpha)+\beta)\in \da$. Since
$(\alpha,f(\alpha)+\beta)\cdot x = \alpha x$, then $
\varphi((\alpha,f(\alpha)+\beta)\cdot x)=\varphi(\alpha x)=
(\alpha x,f(\alpha x))(a_0,f(a_0)+j_0) $. On the other hand, we
have
$$
(\alpha,f(\alpha)+\beta)\varphi(x)=(\alpha,f(\alpha)+\beta)(x,f(x))(a_0,f(a_0)+j_0)=\varphi(\alpha x)
$$
where the last equality holds since $\beta(f(a_0)+j_0)=0$. Thus
$\varphi$ is an $\da$-linear map and, since $\Lambda$ is
surjective, there exists an element $z\in f^{-1}(J)$ such that
$\varphi=\lambda_z$. Therefore $ (a_0,f(a_0)+j_0)=\varphi(1)=
\lambda_z(1)=(z,0), $ that is $f(a_0)+j_0=0$.
\end{proof}

Now we are able to give a sufficient condition and a necessary
condition for the ring $\da$ to be Gorenstein.

\begin{oss}\label{pr:7.6}
We preserve the notation of Proposition \ref{inizio}.  If  $A$ is
a local Cohen-Macaulay ring, with maximal ideal $M$,  having a
canonical module isomorphic (as an $A$--module) to $J$, then $\da$
is Gorenstein. As a matter of fact,  $\iota:A\rightarrow \da$ is a
local ring embedding, since, $\iota^{-1}(M^{\prime_f})=M$. The
conclusion is a consequence of an unpublished result by Eisenbud
\cite[Theorem 12]{d'a} (see also \cite{shapiro}), applied to the
following short exact sequence of $A-$modules
$$
0\rightarrow A\stackrel{\iota}{\rightarrow}\da\rightarrow J\rightarrow 0\,.
$$
\end{oss}

\begin{prop}\label{pr:7.7}
We preserve the notation of Proposition \ref{inizio}. Assume that
$A$ is a local Cohen-Macaulay ring and that ${\rm Ann}_{f(A)+J}(J)=(0)$. If
$\da$ is Gorenstein, then $A$ has a canonical module isomorphic to
$f^{-1}(J)$.
\end{prop}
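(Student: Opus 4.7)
The plan is to identify the canonical module of $A$ with $f^{-1}(J)$ by combining the adjunction formula for the canonical module of a quotient of a Gorenstein ring with Proposition \ref{hom}. Under the standing assumptions of Section~6, $J$ is finitely generated as an $A$-module, so $\iota:A\to\da$ is a finite ring embedding and $\dim A=\dim\da$. By Proposition \ref{inizio}(3) and Remark \ref{de}(b), the retraction $p_{\!{_A}}:\da\twoheadrightarrow A$ is surjective with kernel $J_0:=\{0\}\times J$, so $A\cong \da/J_0$ is a cyclic $\da$-module and ${\rm Ann}_{\da}(A)=J_0$. Moreover, both $A$ and $\da$ are Cohen-Macaulay of the same Krull dimension, the first by hypothesis and the second since Gorenstein rings are CM.

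Next, I would invoke the classical formula \cite[Theorem 3.3.7]{b-h}: if $R$ is a CM local ring admitting a canonical module $\omega_R$ and $M$ is a finitely generated $R$-module which is CM of dimension $\dim R-c$, then a canonical module for the ring $R/{\rm Ann}_R(M)$ is ${\rm Ext}^c_R(M,\omega_R)$. Applying this with $R:=\da$, $\omega_R\cong\da$ (as $\da$ is Gorenstein), and $M:=A$, which is CM of dimension $\dim\da$ so that $c=0$, and observing that $R/{\rm Ann}_R(M)=\da/J_0\cong A$, we obtain the isomorphism of $A$-modules
$$
\omega_A\cong\Hom_{\da}(A,\da).
$$

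Finally, Proposition \ref{hom} furnishes an $A$-linear embedding $\Lambda:f^{-1}(J)\to\Hom_{\da}(A,\da)$ which, thanks to the hypothesis ${\rm Ann}_{f(A)+J}(J)=(0)$, is surjective; hence $\Lambda$ is an $A$-module isomorphism. Composing the two isomorphisms yields $\omega_A\cong f^{-1}(J)$ as $A$-modules, which is the desired conclusion. The only delicate point is ensuring that the $A$-module structures on both sides of the canonical-module formula coincide with the natural ones, but this causes no trouble because the $\da$-action on $A$ factors through $p_{\!{_A}}$ and agrees with the native $A$-action via the identity $p_{\!{_A}}\circ\iota=\ude_{\!{_A}}$. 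The main conceptual ingredient is thus the change-of-rings formula \cite[Theorem 3.3.7]{b-h} together with Proposition \ref{hom}; no further computation is required.
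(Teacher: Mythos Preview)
Your argument is correct and follows essentially the same route as the paper's proof: both use that $\iota$ is finite with $\dim A=\dim(\da)$, that $A$ is a cyclic $\da$-module via $p_{\!{_A}}$, then apply the change-of-rings formula for the canonical module (the paper cites \cite[Theorem~21.15]{ei} rather than \cite[Theorem~3.3.7]{b-h}) to obtain $\omega_A\cong\Hom_{\da}(A,\da)$, and finish with Proposition~\ref{hom}. One small caution: the general statement you attribute to \cite[Theorem~3.3.7]{b-h} is not quite right for arbitrary $M$ (the Ext is the canonical dual of $M$, not a canonical module of $R/{\rm Ann}_R(M)$), but since your $M=A$ is cyclic with $A\cong \da/J_0$ this causes no problem in the application.
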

\noindent \textsc{Proof.} We begin by noting that, since $\da$ is
Gorenstein, it has a canonical module isomorphic to $\da$ as an
$\da-$module. Moreover, since the ring embedding $\iota$ is
finite, we have $\dim(A)=\dim(\da)$. Thus, keeping in mind that
$A$ is a cyclic $\da-$module (via the projection of $\da$ onto
$A$) and applying Proposition \ref{hom} and \cite[Theorem
21.15]{ei}, it follows that $A$ has a canonical module isomorphic
(as an $A-$module) to
$$
{\rm Ext}^0_{A\bowtie^f\!J}(A,\da)\cong \Hom_{A\bowtie^f\!J}(A,\da)\cong f^{-1}(J).
$$
The proof is now complete. \hfill$\Box$

\medskip

As a consequence of Remark \ref{pr:7.6} and Proposition
\ref{pr:7.7}, we deduce immedia\-tely the following.

\begin{cor} We preserve the notation of Proposition \ref{inizio}. Let $A$ be a local
Cohen-Macaulay ring having a canonical module isomorphic to $J$ as
an $A-$module and such that ${\rm Ann}_{f(A)+J}(J)=(0)$. Then,
$f^{-1}(J)$ and $J$ are isomorphic as $A$-modules.
\end{cor}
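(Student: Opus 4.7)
The plan is to chain Remark \ref{pr:7.6} and Proposition \ref{pr:7.7} through the uniqueness (up to isomorphism) of the canonical module over a local Cohen-Macaulay ring. First I would invoke Remark \ref{pr:7.6}: from the hypothesis that $A$ is local Cohen-Macaulay with a canonical module $\omega_A \cong J$ as $A$-modules, we immediately conclude that $\da$ is Gorenstein.

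Next, with $\da$ now known to be Gorenstein, I would apply Proposition \ref{pr:7.7}: the standing assumptions $A$ local Cohen-Macaulay and ${\rm Ann}_{f(A)+J}(J)=(0)$ are in force, so the proposition yields that $A$ admits a canonical module isomorphic (as an $A$-module) to $f^{-1}(J)$.

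Finally, I would appeal to the fact that, over a local Cohen-Macaulay ring, the canonical module is unique up to $A$-isomorphism whenever it exists (a standard result, e.g.\ \cite[Theorem 3.3.4]{b-h}-type statement). Consequently, both $J$ and $f^{-1}(J)$ represent $\omega_A$, and therefore $f^{-1}(J) \cong J$ as $A$-modules, which is the desired conclusion.

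There is no serious obstacle here; the statement is genuinely a formal combination of the two preceding results. The only point worth stating explicitly in the write-up, to avoid any ambiguity, is the uniqueness of the canonical module over a local Cohen-Macaulay ring, since it is the bridge that lets us identify the two $A$-modules produced respectively by Remark \ref{pr:7.6} and Proposition \ref{pr:7.7}.
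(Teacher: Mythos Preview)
Your proposal is correct and follows exactly the route the paper intends: the corollary is stated in the paper as an immediate consequence of Remark~\ref{pr:7.6} and Proposition~\ref{pr:7.7}, and your explicit appeal to the uniqueness of the canonical module over a local Cohen--Macaulay ring is precisely the (tacit) bridge the paper has in mind.
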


With extra assumptions on the ideal $f^{-1}(J)$ and on the ring $f(A)+J$, we can obtain the following characterization of when $\da$ is Gorenstein.
\begin{prop}
We preserve the notation and the assumptions of the beginning of the present section and, moreover, we assume that $A$ is a CM ring, $f(A)+J$ is (S$_1$) and equidimensional, $J\neq 0$ and that $f^{-1}(J)$ is a regular ideal of $A$. Then, the following conditions are equivalent.
\begin{enumerate}[\rm (i)]
\item $\da$ is Gorenstein.
\item $f(A)+J$ is a CM ring, $J$ is a canonical module of $f(A)+J$ and $f^{-1}(J)$ is a canonical module of $A$.
\end{enumerate}
\end{prop}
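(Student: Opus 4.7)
Set $B' := f(A)+J$. The approach is to dualize, with respect to $\omega_A$, the two short exact sequences of $\da$-modules induced by the canonical projections:
\begin{align*}
(S_1):\ 0 \to J \longrightarrow \da \xrightarrow{p_A} A \to 0, \qquad (S_2):\ 0 \to f^{-1}(J) \longrightarrow \da \xrightarrow{p_B} B' \to 0,
\end{align*}
where $J \cong \{0\} \times J$ is a $\da$-module via $p_B$ and $f^{-1}(J) \cong f^{-1}(J) \times \{0\}$ is a $\da$-module via $p_A$. Since $J$ is finitely generated as an $A$-module, the embedding $\iota: A \hookrightarrow \da$ is finite, hence $\dim \da = \dim A$ and, when $A$ is CM with canonical module $\omega_A$, one has $\omega_\da \cong \Hom_A(\da, \omega_A)$ as $\da$-modules. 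A preliminary observation used in both directions is that under our running hypotheses $\dim B' = \dim A$: the existence of a non-zero-divisor in $f^{-1}(J)$, combined with (S$_1$) and equidimensionality of $B'$, forces $B'$ to be a maximal Cohen--Macaulay $A$-module (under (i) via $J$ being MCM over $A$, and under (ii) directly via $\omega_{B'}=J$ being MCM of the right dimension).

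For (ii)$\Rightarrow$(i), since $B'$ is CM, finite over $A$, and of dimension $\dim A$, it is MCM over $A$, so ${\rm Ext}^i_A(B',\omega_A) = 0$ for $i \geq 1$. Using $\Hom_A(B',\omega_A) \cong \omega_{B'} \cong J$ and $\Hom_A(f^{-1}(J), \omega_A) \cong {\rm End}_A(\omega_A) \cong A$, applying $\Hom_A(-,\omega_A)$ to $(S_2)$ yields a short exact sequence of $\da$-modules
\begin{align*}
0 \to J \to \omega_\da \to A \to 0
\end{align*}
parallel to $(S_1)$. The final step is to identify this extension with $(S_1)$ as classes in ${\rm Ext}^1_\da(A, J)$, yielding $\omega_\da \cong \da$ and hence the Gorensteinness of $\da$. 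I would carry out this identification via the pullback description $\da \cong \dasu$ of Remark \ref{de}(e), in which both structure maps are surjective: the pullback rigidity produces a canonical $\da$-linear generator of $\omega_\da$ matching $1 \in \da$.

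For (i)$\Rightarrow$(ii), first verify ${\rm Ann}_{B'}(J) = 0$. Pick a non-zero-divisor $x \in f^{-1}(J)$. Since $\da$ is Gorenstein, hence CM, $J$ is a direct $A$-summand of the MCM module $\da$ via $(S_1)$, so $J$ is itself MCM over $A$; together with (S$_1$) and equidimensionality of $B'$, this forces every minimal prime of $B'$ to contract to a minimal prime of $A$. Hence $f(x) \in J$ avoids every associated prime of $B'$, so $f(x)$ is a regular element of $B'$, yielding ${\rm Ann}_{B'}(J) = 0$. Proposition \ref{pr:7.7} then gives $\omega_A \cong f^{-1}(J)$. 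Now dualize $(S_1)$ via $\Hom_A(-, \omega_A)$, using $\omega_\da \cong \da$ (Gorenstein) and ${\rm Ext}^i_A(J,\omega_A) = 0$ for $i \geq 1$ (as $J$ is MCM), to obtain a short exact sequence $0 \to f^{-1}(J) \to \da \to \Hom_A(J, \omega_A) \to 0$ of $\da$-modules. Comparing with $(S_2)$ identifies $\Hom_A(J, \omega_A) \cong B'$. By $\omega_A$-reflexivity of MCM modules, applying the dual once more gives $J \cong \Hom_A(B', \omega_A)$, which is the canonical module of $B'$; its existence, together with $B'$ being MCM over $A$, forces $B'$ to be CM.

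The principal obstacle is the comparison of extension classes in the direction (ii)$\Rightarrow$(i): one must lift the parallelism of two short exact sequences to a genuine isomorphism of $\da$-modules. This is handled by exploiting the pullback structure, in which both structure maps are surjective after passing to $\dasu$, thereby rigidifying the $\da$-module structure on $\omega_\da$.
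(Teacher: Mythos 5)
Your proposal does not follow the paper's route, and as written it has a genuine gap at its core. The paper's proof is a two-line reduction: by Remark \ref{de}(e), $\da=\dasu$ is a fiber product of the two \emph{surjective} homomorphisms $\pi':B'\rightarrow B'/J$ and $\breve{f'}:A\rightarrow B'/J$ (where $B':=f(A)+J$), and the stated equivalence is then exactly \cite[Theorem 4]{ogoma-1987}. What you attempt is, in effect, a direct proof of Ogoma's theorem, and the step you defer --- identifying the dualized sequence $0\to J\to\omega_{\da}\to A\to 0$ with $(S_1)$ as classes in ${\rm Ext}^1_{\da}(A,J)$ --- is precisely the nontrivial content. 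Exhibiting a short exact sequence parallel to $0\to J\to\da\to A\to 0$ does not yield $\omega_{\da}\cong\da$: distinct classes in ${\rm Ext}^1_{\da}(A,J)$ generally have non-isomorphic middle terms, and no argument is given for why the class obtained by dualizing $(S_2)$ is the one defining $\da$. The phrase ``pullback rigidity produces a canonical $\da$-linear generator of $\omega_{\da}$'' is an assertion, not a construction; note that cyclicity of $\omega_{\da}$ is \emph{equivalent} to Gorensteinness here, so this begs the whole question. The same gap recurs silently in (i)$\Rightarrow$(ii): from $0\to f^{-1}(J)\to\da\to\Hom_A(J,\omega_A)\to 0$ and $(S_2)$ you ``identify'' $\Hom_A(J,\omega_A)\cong B'$, but two isomorphic submodules of $\da$ can have non-isomorphic quotients unless you show that the copy of $\omega_A$ sitting inside $\Hom_A(\da,\omega_A)\cong\da$ is the ideal $f^{-1}(J)\times\{0\}$ itself.

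There is also a circularity in your ``preliminary observation.'' Under (i), the exact sequence $0\to J\to B'\to B'/J\to 0$ with $J$ maximal Cohen--Macaulay gives $\dim_A B'=\dim A$, but the depth lemma only yields ${\rm depth}_A\,B'\geq\min\{{\rm depth}\,J,\ {\rm depth}\,(B'/J)\}$, so $B'$ being a maximal CM $A$-module does \emph{not} follow from $J$ being one; since $B'$ being CM is part of conclusion (ii), you cannot assume it up front, and in your final sentence of (i)$\Rightarrow$(ii) you use ``$B'$ MCM over $A$'' to conclude ``$B'$ CM,'' which are the same statement. (Under (ii) the equality $\dim B'=\dim A$ is true but needs an argument, e.g.\ that $J=\omega_{B'}$ and $f^{-1}(J)\cong\omega_A$ realized as regular ideals force $B'/J$ and $A/f^{-1}(J)$ to have dimensions $\dim B'-1$ and $\dim A-1$, and these two quotients are isomorphic.) The parts that do work are the verification of ${\rm Ann}_{B'}(J)=0$ under (i) and the appeal to Proposition \ref{pr:7.7}; but to close the remaining gaps you would essentially have to reprove Ogoma's theorem, which is what the paper simply cites.
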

\begin{proof}
By Remark \ref{de}(e), $\da$ can be obtained as a fiber product of two surjective ring homomorphisms.
Then, the conclusion follows 
by applying \cite[Theorem 4]{ogoma-1987}.
\end{proof}


We conclude this section by comparing the multiplicity of $\da$
with the multiplicity of $A$. We assume the standing hypotheses of
the present section and  that $A$ is a local Cohen-Macaulay ring of Krull dimension
$n>0$. In particular, by Remark 3.3, if $I$ is an $M-$primary
ideal, then $I(\da)=I\!\Join^f\!\!\!(f(I)B)J$ (Proposition
\ref{extension}(2)) is $M^{\prime_f}-$primary. Furthermore, we
also assume that  $\da$ is a Cohen-Macaulay ring and that the
residue field $\boldsymbol k$ of $A$ and $\da$ is infinite.

Under these assumptions, we have that the multiplicity $e(A)$
of $A$ equals $\lambda_A(A/I)$, where $I$ is any minimal reduction
of $M$ \cite[Proposition 11.2.2]{hs} and where $\lambda_A(E)$
denotes the length of an $A-$module $E$. In particular, since $I$
 is a minimal reduction of $M$ and $A$ has infinite residue
field, it is minimally generated by $n$ elements (where $n=\dim
(A)=\dim(\da)$; see \cite[Lemma 8.3.7]{hs}); moreover,
$I=(a_1,a_2, \dots, a_n)$ is an $M-$primary ideal of a
Cohen-Macaulay local ring, hence it is generated by a regular
sequence. By \cite[Lemma 8.1.3]{hs}, $I(\da)$ is a reduction of
$M^{\prime_f}$ and, since the ideal $I(\da)=((a_1,f(a_1),
(a_2,f(a_2), \dots,(a_n,f(a_n))$ is generated by $n$ elements, it
is a minimal reduction \cite[Corollary 8.3.6]{hs}. Hence, the
multiplicity $e(\da)$ of $\da$ coincides with
$\lambda_{A\bowtie^f\!J}(\da/I(\da))$.

\begin{prop} \label{pr:7.9}
We preserve the notation of Proposition \ref{inizio}. Assume that
both $A$ and $\da$ are Cohen-Maculay local rings. Let $I$ be a
minimal reduction of $M$.  Then,
$e(\da)=e(A)+\lambda_{f(A)+J}(J/(f(I)B)J)$.
\end{prop}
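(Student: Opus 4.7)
\textsc{Proof Proposal.} The plan is to set up a short exact sequence of $\da$-modules for the conormal object $\da/I(\da)$, split it into a piece living over $A$ and a piece living over $f(A)+J$, and then take lengths. The starting point is the exact sequence of $\da$-modules
$$
0\longrightarrow J_0\longrightarrow \da\stackrel{p_A}{\longrightarrow} A\longrightarrow 0,
$$
where $J_0=\{0\}\times J$ (see Proposition \ref{inizio}(3)).

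First I would compute $J_0\cap I(\da)$ and $J_0+I(\da)$ explicitly. By Proposition \ref{extension}(2), $I(\da)=I\!\Join^f\!\!(f(I)B)J$, and a direct check of the defining conditions gives $J_0\cap I(\da)=\{0\}\times (f(I)B)J$, hence the $\da$-linear isomorphism
$$
J_0/(J_0\cap I(\da))\;\cong\; J/(f(I)B)J,
$$
on which $\da$ acts via $p_B$. On the other hand, $J_0+I(\da)=I\!\Join^f\!\!J$, and Proposition \ref{inizio}(2) yields a $\da$-linear isomorphism $\da/(J_0+I(\da))\cong A/I$, on which $\da$ acts via $p_A$. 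Passing to the quotient modulo $I(\da)$ in the displayed exact sequence then produces the short exact sequence of $\da$-modules
$$
0\longrightarrow J/(f(I)B)J\longrightarrow \da/I(\da)\longrightarrow A/I\longrightarrow 0.
$$

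Next, taking lengths as $\da$-modules (all three being of finite length since $\da$ is CM and $I(\da)$ is $M^{\prime_f}$-primary), I obtain
$$
e(\da)\;=\;\lambda_{\da}(\da/I(\da))\;=\;\lambda_{\da}(J/(f(I)B)J)+\lambda_{\da}(A/I).
$$
The remaining task is to identify these two $\da$-lengths with the asserted ones over $A$ and $f(A)+J$. This uses the elementary fact that when a module structure is obtained by restriction of scalars along a surjective ring homomorphism $\varphi:R\twoheadrightarrow R'$, the lattices of $R$- and $R'$-submodules coincide, and hence so do composition-series lengths. Applied to $p_A:\da\twoheadrightarrow A$ and to $p_B:\da\twoheadrightarrow f(A)+J$, this gives $\lambda_{\da}(A/I)=\lambda_A(A/I)$ and $\lambda_{\da}(J/(f(I)B)J)=\lambda_{f(A)+J}(J/(f(I)B)J)$. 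Since $I$ is a minimal reduction of $M$ in the CM ring $A$, we have $\lambda_A(A/I)=e(A)$ (as recalled just before the statement), and the claimed formula follows.

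The only mildly delicate step is verifying $J_0\cap I(\da)=\{0\}\times (f(I)B)J$ together with $J_0+I(\da)=I\!\Join^f\!\!J$; everything else is either bookkeeping or invocation of additivity of length and of results already in the paper.
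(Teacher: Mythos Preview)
Your proof is correct and follows essentially the same approach as the paper: both decompose $\da/I(\da)$ via the intermediate ideal $I\!\Join^f\!\!J=J_0+I(\da)$, identify the top quotient with $A/I$ using Proposition~\ref{inizio}(2), and identify the bottom piece with $J/(f(I)B)J$. The only cosmetic difference is that you package the decomposition as a short exact sequence and invoke restriction of scalars along the surjections $p_A$ and $p_B$, whereas the paper writes the length additivity directly and argues the equality $\lambda_{\da}(I\!\Join^f\!\!J/I(\da))=\lambda_{f(A)+J}(J/(f(I)B)J)$ by an explicit bijection between the relevant lattices of ideals.
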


\noindent \textsc{Proof.} By the previous observations, we know
that the equality $e(\da)= \lambda_{A\bowtie^f\!J}(\da/I(\da))$
holds. Moreover, we have
$$
\lambda_{A\bowtie^f\!J}(\da/I(\da))=\lambda_{A\bowtie^f\!J}
(\da/I\!\Join^f\!\!\!J)+\lambda_{A\bowtie^f\!J}(I\!\Join^f\!\!\!J/I(\da))\,.
$$
Now, since by Proposition \ref{inizio}(2) $A/I\cong
\da/I\!\!\Join^f\!\!\!J$ (as rings),  we have
$\lambda_{A\bowtie^f\!J}(\da/I\!\!\Join^f\!\!\!J)=\lambda_A(A/I)=e(A)$.
Moreover, again by Proposition \ref{inizio} (3), for every ideal
$L$ of $\da$ such that $I(\da)=I\!\!\Join^f\!\!\!(f(I)B)J\subseteq
L\subseteq I\!\Join^f\!\!\!J$, the image $p_B(L)$ is an ideal of
$f(A)+J$ such that $(f(I)B)J \subseteq p_B(L) \subseteq J$.
Conversely, for every ideal $H$ of $f(A)+J$ such that
$f(I)J\subseteq H\subseteq J$, then (by Proposition
\ref{extension}(1)) $I\!\Join^f\!\!H$ is an ideal of $\da$ such
that $I\!\Join^f\!\!\!(f(I)B)J \subseteq H\subseteq
I\!\Join^f\!\!\!J$. Hence,  we easily conclude that
$\lambda_{A\bowtie^f\!J}(I\!\Join^f\!\!\!J/I(\da))=\lambda_{f(A)+J}(J/(f(I)B)J)$
and the proof is complete. \hfill$\Box$\\

When $A=B$, and $f=\mbox{\it id}_A$, the amalgamation along $J$
gives rise to the amalgamated duplication $A\!\Join\! J$. In this
case we obtain a better result about the multiplicity.


\begin{cor} \label{cor:7.10}
Let $(A, M)$ be a Cohen-Macaulay local ring and $J$ be an ideal of
$A$ with $\dim_A(J)=\dim(A)$. Let $I$ be any minimal reduction of
$M$. Then $e(A\!\Join \!\! J)=e(A)+\lambda_A(J/IJ)$. In
particular, if $\dim(A)=1$, then $e(A\!\Join \!\! J)$ $=2e(A)$.
\end{cor}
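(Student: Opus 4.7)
The first equality is essentially a direct specialization of Proposition \ref{pr:7.9}. By \cite[Example 2.4]{dafifoproc}, the amalgamated duplication $A\!\Join\!J$ coincides with $A\!\Join^{\ude_A}\!\!J$, so I would apply Proposition \ref{pr:7.9} with $B:=A$ and $f:=\ude_A$. Under this specialization one has $f(I)B = I$, hence $(f(I)B)J = IJ$, and $f(A)+J = A$. Substituting these identifications into the formula of Proposition \ref{pr:7.9} gives
$$
e(A\!\Join\!J) \;=\; e(A) + \lambda_A(J/IJ),
$$
which proves the first assertion.

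For the \emph{in particular} statement, assume $\dim(A)=1$. Then any minimal reduction of $M$ is generated by a single element, so $I=(x)$ with $x \in M$. Since $A$ is Cohen--Macaulay of positive dimension and $(x)$ is $M$-primary, $x$ is a non-zero-divisor of $A$; equivalently, $x$ avoids every $P \in \Min(A)$. Moreover, by Remark \ref{regularsequence} (invoked for $A\!\Join\! J$, which is assumed CM in the setting of Proposition \ref{pr:7.9}), $J$ is a maximal Cohen--Macaulay $A$-module, so $\mathrm{depth}_A(J)=1$. In particular, $J$ contains a non-zero-divisor, hence $J \not\subseteq P$ for every $P \in \mathrm{Ass}(A)=\Min(A)$, and $x$ is also a non-zero-divisor on $J$.

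Consequently, $\lambda_A(J/xJ) = e((x);J)$, the multiplicity of the parameter ideal $(x)$ on the CM module $J$. Applying the associativity formula for multiplicity,
$$
e((x);J) \;=\; \sum_{P\in \Min(A)} \lambda_{A_P}(J_P)\, e((x); A/P),
$$
and noting that $J \not\subseteq P$ forces $J_P = A_P$, one gets $\lambda_{A_P}(J_P)=\lambda_{A_P}(A_P)$. Comparing with the corresponding formula applied to $A$ itself yields $e((x);J) = e(A)$, so $\lambda_A(J/xJ)=e(A)$ and therefore $e(A\!\Join\!J)=2e(A)$.

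The only mildly delicate point is the transition $\lambda_A(J/xJ)=e((x);J)$ together with the use of the associativity formula: both rest on the fact that $J$ is a maximal CM $A$-module (so that $x$ is regular on $J$ and the module-theoretic multiplicity behaves additively over $\Min(A)$). The first part, by contrast, is a pure bookkeeping specialization of the preceding proposition and should present no obstacle.
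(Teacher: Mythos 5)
Your handling of the first equality is correct and coincides with the paper's: identify $A\!\Join\! J$ with $A\!\Join^{\udes}\!J$, specialize Proposition \ref{pr:7.9}, and observe that $f(I)B=I$ and $f(A)+J=A$. For the one-dimensional statement you take a genuinely different route: the paper squeezes $\lambda_A(J/IJ)$ between two inequalities (an upper bound $\lambda_A((I+J)/I)\le\lambda_A(A/I)=e(A)$ and a lower bound $\lambda_A(J/IJ)\ge e(I;J)=e(M;J)\ge e(A)$), whereas you prove the single equality $\lambda_A(J/xJ)=e((x);J)$ using that $J$ is a maximal CM module, and then evaluate $e((x);J)$ by the associativity formula over $\Min(A)$.

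The problem is the step on which the whole computation hinges. You claim that $\mathrm{depth}_A(J)=1$ implies that $J$ contains a non-zero-divisor of $A$, hence $J\not\subseteq P$ for every $P\in\Min(A)$. That is a non sequitur: depth is about elements of $M$ that are regular \emph{on} $J$, not about elements \emph{of} $J$ being regular on $A$. Concretely, in $A=k[[x,y]]/(xy)$ the ideal $J=xA\cong A/(y)$ is a maximal CM $A$-module of depth $1$, yet every element of $J$ is a zero-divisor and $J$ is contained in the minimal prime $(x)$, where $J_{(x)}=0$. There the associativity formula gives $e((x+y);J)=1<2=e(A)$, and indeed $\lambda_A(J/IJ)=1$, so both your argument and the conclusion $e(A\!\Join\! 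J)=2e(A)$ fail. What your localization argument actually requires is that $J$ not be contained in any minimal prime of $A$ (equivalently, that $J$ contain an $A$-regular element); with that hypothesis made explicit, the rest of your computation is fine. Be aware that the paper's own lower bound $e(M;J)\ge e(A)$ silently uses the same condition, so the issue is a missing hypothesis rather than a defect peculiar to your approach --- but as written, deriving it from the depth of $J$ is incorrect and cannot be repaired without adding an assumption.
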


\noindent \textsc{Proof.}  The first statement is a
straightforward consequence of the previous proposition. As for
the one-dimensional case, any minimal reduction $I$ of $M$ is
principal; hence $IJ=I\cap J$ and
$\lambda_A(J/IJ)=\lambda_A((I+J)/I) \leq \lambda_A(A/I)=e(A)$. On
the other hand, by \cite[Proposition 11.1.10 and Theorem
11.2.3]{hs}, $\lambda_A(J/IJ) \geq e(I;J)=e(M;J) \geq e(A)$ (where
$e(I;J)$ denotes the multiplicity of $I$ on the $A-$module $J$;
see \cite[Definition 11.1.5]{hs}). Hence, we have the equality
$\lambda_A(J/IJ)=e(A)$ and the proof is complete. \hfill$\Box$\\

\section{Appendix}
Let $f:A\longrightarrow B$ be a ring homomorphism and let $J$ be an ideal of $B$. 
By Corollary \ref{localefarf}(3), when $A$ is a local ring with maximal ideal  $M$ and $J$ is contained in the Jacobson radical of $B$, then $\am$ is a local ring with maximal ideal  $M^{\prime_f}:=\{(m,f(m)+j)\mid m\in M, j\in J \}$.
As it was proved in 
Proposition \ref{embdimineq}(1)
, if $\am$ is a local ring with finitely generated maximal ideal, then the maximal ideal  $M$ of $A$ is finitely generated and the following inequality ${\rm embdim}(A)\leq {\rm embdim}(\am)$ holds.
However, part 2 of 
Proposition \ref{embdimineq}
and Theorem \ref{embdimeq} hold under the additional assumption, not explicitly declared, that $B = f(A) +J$.
The following example shows that it is possible that  $B \supsetneq f(A) +J$ and $J$ is finitely generated as an ideal of $B$, but not finitely generated as an ideal of  $f(A) +J$. 
\smallskip

\begin{ex}
	Let $A:=K$ be a field and $T,U$ be indeterminates over $K$.  Set  $B:=K(U)[T]_{(T)}$ and $J:=TK(U)[T]_{(T)}$.
	By \cite[Example 2.6]{dafifoproc},  the integral domain $K+TK(U)[T]_{(T)}$ is canonically isomorphic to  $A \Join^f J$, where $f:A  \rightarrow B$ is the natural embedding.
	By Lemma \ref{fon} and Corollary \ref{localefarf}(3), $f(A)+J= K+TK(U)[T]_{(T)}$ is local and 1-dimensional and the prime spectrum of $f(A)+J$ coincides with that of the DVR $B$. 
	Since the field extension $K\subseteq K(U)$ is not finite, it is easy to infer that $f(A)+J$ is non Noetherian and thus its maximal ideal $J$, as an ideal  of $f(A)+J$,  is not finitely generated.
\end{ex}

If $B \neq f(A)+J$,  the correct assumption in
Proposition \ref{embdimineq}(2)
in order to ensure that  $M^{\prime_f}$ is finitely generated is to require that  $M$ is a finitely generated ideal of $A$ and $J$ is a finitely generated ideal of $f(A)+J$, as shown in the next result. 
\smallskip

\begin{prop}\label{corrected}
Let $f:A\longrightarrow B$ be a ring homomorphism and let $J$ be an ideal of $B$. Assume that $A$ is local with finitely generated maximal ideal  $M$ and that $J$ is finitely generated, as an ideal of $f(A)+J$, and that $J$ is contained in the Jacobson radical of $B$. Then, the ring $\am$ is local with finitely generated maximal ideal and moreover we have
$${\rm embdim}(\am)={\rm embdim}(A)+\nu(J)\,,$$
where now $\nu(J)$ denotes the minimum number of generators of $J$ as an ideal of the ring $f(A)+J$.
\end{prop}
\noindent
\textsc{Proof.} Let $\{m_1, m_2, \ldots, m_r\}$ (respectively, $\{j_1, j_2, \ldots, j_s \}$) be minimal sets of generators of $\f m$ (respectively, of $J$ as an ideal of $f(A)+J$). We now claim that 
$$
\mathcal G:=\{(m_i,f(m_i)), (0,j_h)\mid i=1, 2,\ldots, r, \,  h=1, 2, \ldots,s  \}
$$
is a minimal set of generators of  $M^{\prime_f}$. The fact that $\mathcal G$ generates  $M^{\prime_f}$ is straightforward and we left its easy proof to the reader. 
To prove that $\mathcal G$ is minimal with respect to the property of generating  $M^{\prime_f}$ it suffices to show that the canonical image of $\mathcal G$ into 
$ M^{\prime_f}/( M^{\prime_f})^2$ is linearly independent over the residue field $ \boldsymbol{k}$ of $A$. 
Let $a_1, a_2, \ldots,a_r,\, \alpha_1, \alpha_2,\ldots,\alpha_s\in A$ be such that
$$
\sum_{i=1}^r[a_i]_{ M}[(m_i,f(m_i))]_{ M^{\prime_{\!_f}}} + \sum_{h=1}^s[\alpha_h]_{ M}[(0, j_h)]_{{ M^{\prime_{\!_f}}}}=0 \qquad \mbox{in }\qquad { M^{\prime_f}}/({ M^{\prime_f}})^2 \qquad (\star).
$$

The same argument given in
Theorem \ref{embdimeq}
proves that $a_i\in  M$, for $i=1, 2, \ldots,r$, and thus $(\star)$ is equivalent to state that 
$$
\mathbf x:=\sum_{h=1}^s(0,f(\alpha_h)j_h)\in ( M^{\prime_f})^2 . 
$$
By definition, $\mathbf x$ is sum of elements of the type $(\mu_k,f(\mu_k)+u_k)(\mu_k',f(\mu_k')+u_k')$, for $k=1,2, \ldots, t$, with $\mu_k,\mu_k'\in  M$ and $ u_k,u_k'\in J$. 
It follows that $\sum_{k=1}^t\mu_k\mu_k'=0$, and then $\sum_{h=1}^sf(\alpha_h)j_h\in f( M)J+J^2\subseteq J(f( M)+J)$. 
By contradiction, assume that there exists some index $h$ such that $\alpha_h \in A\setminus  M$. 
{ Say $h=1$,}  let $\lambda_1$ be the inverse of $\alpha_1$ in $A$. Then $f(\lambda_1)\sum_{h=1}^sf(\alpha_h)j_h\in J(f( M)+J)$. 
Take elements $\eta_1, \eta_2,\ldots\eta_s\in M$ and $v_1, v_2,\ldots,v_s\in J$ such that 
$$
j_1+f(\lambda_1)\sum_{h=2}^sf(\alpha_h)j_h=f(\lambda_1)\sum_{h=1}^sf(\alpha_h)j_h=\sum_{h=1}^s(f(\eta_h)+v_h)j_h\,.
$$
It follows that $j_1(1-f(\eta_1)-v_1)\in (j_2, j_3, \ldots,j_s)(f(A)+J)$. 
Since $f( M)+J$ is the maximal ideal of the local ring $f(A)+J$, it follows that $1- f(\eta_1)-v_1$ is invertible in $f(A)+J$, that is, $j_1\in (j_2, j_3, \ldots,j_s)(f(A)+J)$, contradicting the minimality of $\{j_1, j_2, \ldots, j_s \}$. The proof is now complete. \hfill$\Box$

\smallskip

\begin{oss}
	Note that if $J$ is finitely generated as an $A$-module (with the structure induced by the ring homomorphism $f$), then it is finitely generated as an ideal of $f(A)+J$ too, as it is easily seen. The converse is not true, by \cite[Remark 5.10]{dafifoproc}.
\end{oss}  

\begin{oss}
If $A$ is local with finitely generated maximal ideal $ M$ such that $f( M)B=B$ and $J$ is finitely generated as an ideal of (the local ring) $f(A)+J$, then Nakayama's Lemma implies that $J=0$, according to Propositions \ref{edimlower} and \ref{corrected}. 
\end{oss}

\noindent
\textsc{Question.} Is there a local amalgamation $\am$ with finitely generated maximal ideal    such that $J$ is not finitely generated as an ideal of $f(A)+J$ and $f( M)B\neq B$ (where $ M$ is the maximal ideal of $A$)? 

\end{document}